\newtheorem{theorem}{Theorem}[section]
\newtheorem{lemma}{Lemma}[section]
\newtheorem{corollary}{Corollary}[section]
\newcommand{\C}{\mathbb{C}}
\newcommand{\Z}{\mathbb{Z}}
\newcommand{\Zfour}{\mathbb{Z}_4}
\newcommand{\bfe}{\mathbf{e}}
\newcommand{\Fpm}{\mathbb{F}_{2^m}}
\newcommand{\Fq}{\mathbb{F}_{q}}
\newcommand{\Te}{\mathcal{T}}
\newcommand{\Tem}{\mathcal{T}^*}
\newcommand{\T}{\textup{T}}
\newcommand{\tr}{\textup{tr}}
\newcommand{\R}{\mathcal{R}}
\newcommand{\E}{\mathcal{E}}
\newcommand{\N}{\mathcal{N}}
\newcommand{\sx}{\sum_{X\in\Te}}%
\newcommand{\sy}{\sum_{Y\in\Te}}%
\newcommand{\sz}{\sum_{Z\in\Te}}%
\newcommand{\ssTe}{\sum_{X,Y\in\Te}}%
\newcommand{\sssTe}{\sum_{X,Y,Z\in\Te}}%
\newcommand{\sumu}{\sum_{u\in\Z_4}}%
\newcommand{\suma}{\sum_{a\in R}}%
\newcommand{\sumb}{\sum_{b\in \Fq}}%
\newcommand{\uif}{\textup{ if }}
\newcommand{\DG}{\mathcal{DG}}
\def\whitebox{{\hbox{\hskip 1pt
 \vrule height 6pt depth 1.5pt
 \lower 1.5pt\vbox to 7.5pt{\hrule width
    3.2pt\vfill\hrule width 3.2pt}%
 \vrule height 6pt depth 1.5pt
 \hskip 1pt } }}
\def\qed{\ifhmode\allowbreak\else\nobreak\fi\hfill\quad\nobreak
     \whitebox\medbreak}
\newcommand{\ignore}[1]{}
\begin{document}
\title{Association schemes related to Delsarte-Goethals codes
\thanks{
The research of T. Feng was supported in part by the Fundamental Research Funds for the Central
Universities of China, Zhejiang Provincial Natural Science Foundation (LQ12A01019), National Natural Science
Foundation of China (11201418).
The research of G. Ge was supported by
the National Outstanding Youth Science Foundation of China under Grant
No.~10825103, National Natural Science Foundation of China
under Grant No.~61171198, and Specialized Research Fund for the
Doctoral Program of Higher Education. The research of S. Hu was supported by the Scholarship Award for Excellent
Doctoral Student granted by Ministry of Education of China.
}
}

\author{\small
Tao Feng\thanks{Email: tfeng@zju.edu.cn},
Gennian Ge\thanks{Corresponding author. Email: gnge@zju.edu.cn}
and Sihuang Hu\thanks{Email: husihuang@zju.edu.cn}\\
\small          Department of Mathematics\\
\small          Zhejiang  University\\
\small          Hangzhou 310027, Zhejiang\\
\small          P. R. China\\
}
\date{}\maketitle

\begin{abstract}
In this paper, we construct an infinite series of $9$-class association schemes from a
refinement of the partition of Delsarte-Goethals codes by their Lee
weights. The explicit expressions of the dual schemes are  determined
through direct manipulations of complicated exponential sums.
As a byproduct, the other three infinite families of association schemes
are also obtained as fusion schemes and quotient schemes.

\medskip
\noindent {{\it Key words and phrases\/}:  Association schemes, Gray map, Quaternary codes, Delsarte-Goethals codes}\\
\smallskip
\noindent {{\it AMS subject classifications\/}: Primary 05E30.}
\smallskip
\end{abstract}

\section{Introduction}\label{intro}
Since the discovery of the $\Z_4$-linearity of Kerdock, Preparata,
Goethals, and Delsarte-Goethals codes (see~\cite{HKCSS}), there have been several applications
of $\Z_4$-linear codes in the constructions of combinatorial configurations,
such as $t$-designs and association schemes.
According to the paper \cite{So} of Sol\'e,
the discovery of the $\Z_4$-linearity is motivated by a construction of
association schemes due to Liebler and Mena (see~\cite{LM}) using Galois rings of characteristic $4$.
The research on constructing $t$-designs from linear codes over $\Z_4$
is first put forwarded by Harada in~\cite{Ha}, and subsequently followed by
Helleseth and his collaborators in a series of papers (see~\cite{SKH} and
the references  therein).

Association schemes form a central part of algebraic combinatorics, and have
played important roles in several branches of mathematics, such as coding
theory and graph theory.
Henry Cohn and his collaborators in
\cite{BBC} conjectured  that a certain $3$-class association scheme on
$64$ points determines a universally optimal configuration in
$\mathbb{R}^{14}$. Then in \cite{ABS} Abdukhalikov, Bannai and Suda
generalized this example in terms of binary and quaternary Kerdock and
Preparata codes, as well as in terms of maximal set of mutually unbiased
basis. To be specific, they constructed a series of $3$-class association schemes
using the partition of shortened Kerdock codes induced by their Lee
weights, and dual schemes on the cosets of punctured Preparata codes.
This motivates us to look at another important class of quaternary codes,
the Delsarte-Goethals ($\DG$) codes.

The $\DG$ code has  $6$ nonzero Lee weights. It turns out that the
partition of the $\DG$ code by its Lee weights does not give an
association scheme. We should further pick out the codewords of the
Kerdock code, and this eventually yields a $9$-class association scheme.
Using complicated exponential sums and heavy computations, we get the
structure of the dual scheme and their eigenmatrices. The dual scheme
can not be obtained in an analogous way on the cosets of the Goethals
code. There is a conjectured $22$-class association scheme on the $\DG$
code by the partition using its complete weight enumerator, but our
scheme is not a fusion scheme of this conjectured scheme.

When $m=3$, the image of the $\DG$ code under Gray map is linear, and we
have checked that the image of our $9$-class association scheme under Gray map
remains to be a scheme but with different parameters.
When $m>3$, the Gray map image of the $\DG$ code is no longer nonlinear.
It is not clear whether we can find translate schemes in
elementary abelian $2$-groups with the same parameters as the schemes we construct in this paper.

This paper is organized as follows. In Section~\ref{sec:prel}, we give
some preliminaries on association schemes, Galois rings, and some quaternary
codes with their Lee weight distributions.
In Section~\ref{sec:descriptionOfDGSchemes}, we describe our
construction of a $9$-class association scheme coming from refining
the partition of the $\DG$ code by its Lee weights.
Its dual scheme and eigenmatrices are explicitly determined.
Also a $7$-class scheme as its fusion scheme and a $5$-class scheme
as its quotient scheme are obtained.
The eigenmatrices of these schemes are listed in Appendix A.
The proof of our main result is provided in Section~\ref{sec:completion}.

\section{Preliminaries}\label{sec:prel}
\subsection{Association schemes.}
Let $X$ be a nonempty finite set, and a set of symmetric relations $R_0,R_1,\cdots, R_d$ be a partition of $X\times X$ such that $R_0=\{(x,x)|x\in X\}$. Denote by $A_i$ the adjacency matrix of $R_i$ for each $i$, whose $(x,y)$-th entry is $1$ if $(x,y)\in R_i$ and $0$ otherwise. We call $(X,\{R_i\}_{i=0}^d)$ a {\it $d$-class association scheme} if there exist numbers $p_{i,j}^k$ such that
\[
A_iA_j=\sum_{k=0}^dp_{i,j}^kA_k.
\]
These numbers are called the intersection numbers of the scheme. The $\C$-linear span of $A_0,A_1,\cdots,A_d$ forms a semisimple  algebra of dimension $d+1$, called the {\it Bose-Mesner algebra} of the scheme. With respect to the basis $A_0,A_1,\cdots,A_d$, the matrix of the multiplication by $A_i$ is denoted by $B_i$, namely
\[
A_i(A_0,A_1,\cdots,A_d)=(A_0,A_1,\cdots,A_d)B_i,\;0\leq i\leq d.
\]
Since each $A_i$ is symmetric, this algebra is commutative. There exists a set of minimal idempotents $E_0,E_1,\cdots,E_d$  which also forms a basis of the algebra. The $(d+1)\times (d+1)$ matrix $P$ such that
\[
(A_0,A_1,\cdots,A_d)=(E_0,E_1,\cdots,E_d)P
\]
is called the {\it first eigenmatrix} of the scheme. Dually, the $(d+1)\times (d+1)$ matrix $Q$ such that
\[
(E_0,E_1,\cdots,E_d)=\frac{1}{|X|}(A_0,A_1,\cdots,A_d)Q
\]
is called the {\it second eigenmatrix} of the scheme. Clearly, we  have $PQ=|X|I$.

We call an association scheme $(X,\{R_i\}_{i=0}^d)$ a {\it translation association scheme} or a {\it Schur ring} if $X$ is a  (additively
written) finite abelian  group and there exists a partition $S_0=\{0\},S_1,\cdots,S_d$  of $X$ such that
\[
R_i=\{(x,x+y)|\,x\in X, y \in S_i\}.
\]
For brevity, we will just say that $(X,\{S_i\}_{i=0}^d)$ is an association scheme.

Assume that $(X,\{S_i\}_{i=0}^d)$ is a translation association scheme. There is an equivalence relation defined on the character group $\hat{X}$ of $X$ as follows: $\chi\sim\chi'$ if and only if $\chi(S_i)=\chi'(S_i)$ for each $0\leq i\leq d$. Here $\chi(S)=\sum_{g\in S}\chi(g)$, for any $\chi\in\hat{X}$, and $S\subseteq X$. Denote by $D_0, D_1,\cdots,D_d$ the equivalence classes, with $D_0$ consisting of only the principal character. Then $(\hat{X},\{D_i\}_{i=0}^d)$ forms a translation association scheme, called the {\it dual} of $(X,\{S_i\}_{i=0}^d)$. The first eigenmatrix of the dual scheme is equal to the second eigenmatrix of the original scheme. Please refer to \cite{BI} and \cite{BCN} for more details.


We shall need the following well-known criterion due to Bannai \cite{Ban} and Muzychuk \cite{Mu}, called the {\it Bannai-Muzychuk criterion}: {\it Let $P$ be the first eigenmatrix of an association scheme $(X, \{R_i\}_{0\leq i\leq d})$, and $\Lambda_0:=\{0\}, \Lambda_1,\ldots ,\Lambda_{d'}$ be a partition of $\{0,1,\ldots ,d\}$. Then $(X, \{R_{\Lambda_i}\}_{0\leq i\leq d'})$ forms an association scheme if and only if there exists a partition $\{\Delta_i\}_{0\leq i\leq d'}$ of $\{0,1,2,\ldots ,d\}$ with $\Delta_0=\{0\}$ such that each $(\Delta_i, \Lambda_j)$-block of $P$ has a constant row sum. Moreover, the constant row sum of the $(\Delta_i, \Lambda_j)$-block is the $(i,j)$-th entry of the first eigenmatrix of the fusion scheme.}

\subsection{Quaternary codes}
Let $\mu:\Zfour\rightarrow \Z_2$
denote the modulo $2$ reduction map. We extend $\mu$ to $\Z_4[x]$ in the
natural way. A monic polynomial $g(x)\in\Z_4[x]$  is said to be
basic irreducible if $\mu(g(x))$ is a monic irreducible polynomial  in
$\Z_2[x]$.  A Galois ring $R=GR(4,m)$ of characteristic $4$ with $4^m$
elements is defined as the quotient ring $\Z_4[x]/(f(x))$,  where $f(x)$ is a
monic basic irreducible polynomial of degree $m$. The collection of
non-units of $R$ forms the unique maximal ideal $2R$, so $R$ is a local
ring. Clearly, $\mu$ has a natural extension to $R[x]$ and
$\mu(R)=R/2R$ is isomorphic to a finite field $\Fq$ of size $q=2^m$.

As a multiplicative group, the units $R^*$ of $R$ has a cyclic subgroup of order $2^m-1$,
whose generator is denoted by $\beta$. Let $\Te=\{0,1,\beta,\ldots,\beta^{2^m-2}\}$.
Every element $z\in R$ can be expressed uniquely as
$$z=A+2B, \qquad A,B\in\Te.$$
Let $\mu(\beta)=\alpha.$ Then $\alpha$ is a primitive element
in $\Fq$, and $\mu(\Te)=\Fq$.

The Galois ring $R$ has a cyclic Galois group
of order $m$ generated by the following Frobenius map $\sigma$:
$$\sigma(z)=\sigma(A+2B)=A^2+2B^2.$$
The trace of $z$, $\T(z)$, from $R$ to $\Z_4$ is defined as
$$\T(z)=\sum_{i=0}^{m-1}\sigma^i(z),$$
and $\tr(x)$ is the ordinary trace function from $\Fq$ to $\Z_2$.

The Goethals code $\mathcal{G}$ of length $q=2^m$ over $\Z_4$ is a linear code
with the following parity-check matrix
\begin{eqnarray*}
  H_\mathcal{G}=\left[
    \begin{array}{llllll}
    1 &1 &1 &1 & \cdots &1\\
    0 &1 &\beta&\beta^2&\cdots&\beta^{2^m-2}\\
    0 &2 &2\beta^3&2\beta^6&\cdots&2\beta^{3(2^m-2)}\\
  \end{array}
  \right].
\end{eqnarray*}
It is shown in \cite[Theorem 25]{HKCSS} that if $m$ is odd, then the
Goethals code $\mathcal{G}$ has minimum Lee distance $8$. The Delsarte-Goethals
code $\mathcal{DG}$ is defined as the dual of $\mathcal{G}$ over $\Z_4$,
so its generator matrix is just $H_\mathcal{G}$.

Also the Delsarte-Goethals code has the following trace description.
Let $\mathbf{c}(u,a,b)$, where $u\in\Z_4,a\in R,b\in\Te$, be a vector in $\Z_4^q$ indexed
by the elements of $\Te$ such that $\mathbf{c}(u,a,b)_x=u+\T(ax+2bx^3)$ for all $x\in\Te$,
then
$$\mathcal{DG}=\{\mathbf{c}(u,a,b)\mid u\in\Z_4,a\in R,b\in\Te\}.$$
The Lee weight of codeword $\mathbf{c}(u,a,b)$ can be expressed as
\begin{equation}
w_L(\mathbf{c}(u,a,b))=q-\Re\left(i^u\sum_{x\in\Te} i^{\T(ax+2bx^3)}\right).
  \label{weightDisCharValue}
\end{equation}

\begin{lemma}\cite[Theorem 1]{HKS}
  Let $q=2^m$ where $m$ is odd. The Lee weight distribution of the Delsarte-Goethals code
  $\mathcal{DG}$ is
  $$
  A_j=\left\{
    \begin{array}{ll}
      1, &\uif j=0 \textup{ or } 2q;\\
      (q-1)q(2q-1)/6, &\uif j=q\pm\sqrt{2q};\\
      (q-1)2q(q+4)/2, &\uif j=q\pm\sqrt{q/2};\\
      (2q-1)(q^2-q+2), &\uif j=q.
    \end{array}
    \right.
  $$
  \label{weightGoethals}
\end{lemma}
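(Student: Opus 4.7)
The plan is to evaluate the character sum
\[
S(u,a,b):=i^u\sum_{x\in\Te}i^{\T(ax+2bx^3)}
\]
from (\ref{weightDisCharValue}) for every triple $(u,a,b)\in\Zfour\times R\times\Te$, take its real part, and then tally multiplicities. The first step is to decouple the two levels of the Galois ring. Writing $a=A+2B$ with $A,B\in\Te$ and using $\T(2z)=2\tr(\mu(z))$ for $z\in R$, one obtains
\[
\T(ax+2bx^3)=\T(Ax)+2\tr(\bar B\bar x+\bar b\bar x^3),
\]
where bars denote reduction modulo $2$, so
\[
S(u,a,b)=i^u\sum_{x\in\Te}i^{\T(Ax)}(-1)^{\tr(\bar B\bar x+\bar b\bar x^3)}.
\]

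The evaluation splits naturally on whether $A=0$. If $A=0$ (so $a\in 2R$), the sum collapses to the Walsh transform $\sum_{y\in\Fq}(-1)^{\tr(\bar By+\bar by^3)}$ of the cubing function on $\Fq$. Since $m$ is odd, $\gcd(3,2^m-1)=1$ and $y\mapsto y^3$ is the Gold almost-bent function, whose Walsh spectrum is the three-valued set $\{0,\pm\stq\}$ with the standard AB frequencies. Multiplying by $i^u$ and taking real parts produces contributions to the weights $0$, $2q$, $q$, and $q\pm\stq$, exhausting the part of the distribution coming from $a\in 2R$.

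For $A\neq 0$ (so $a$ is a unit in $R$), the bijection $x\mapsto A^{-1}y$ on $\Te$ brings the sum to
\[
\sum_{y\in\Te}i^{\T(y)}(-1)^{\tr(\bar c\bar y+\bar d\bar y^3)},\qquad \bar c=\bar B\bar A^{-1},\ \bar d=\bar b\bar A^{-3}.
\]
The essential tool here is the Galois-ring identity $\T(y)\equiv\tr(\bar y)+2Q(\bar y)\pmod 4$ for $y\in\Te$, where $Q:\Fq\to\mathbb{F}_2$ is an explicit $\mathbb{F}_2$-valued quadratic form arising from the Witt-vector carry in summing the Teichm\"uller lifts of the Frobenius conjugates of $y$. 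Substituting $i^{\T(y)}=i^{\tr(\bar y)}(-1)^{Q(\bar y)}$ rewrites the inner sum as a mixed exponential sum on $\Fq$. For $\bar d=0$ this is a quadratic Gauss sum of magnitude $\sqrt{q}$ with phase $\pm\pi/4\pmod{\pi/2}$, so $\Re(i^uS)=\pm\sqrt{q/2}$, yielding the weights $q\pm\sqrt{q/2}$. For $\bar d\neq 0$, combining the quadratic-form twist with the AB Walsh spectrum of $y^3$ yields real parts in $\{0,\pm\stq\}$, contributing to the weights $q$ and $q\pm\stq$.

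The main obstacle is the combinatorial bookkeeping in Case~II: beyond evaluating each individual sum, one must count the number of triples $(u,A,B,b)$ producing each weight. This amounts to tracking the joint distribution of the quadratic-form twist $Q$ and the cubic Walsh coefficient as $(\bar c,\bar d)$ ranges over $\Fq\times\Fq$, which in turn hinges on a careful rank/signature analysis of $Q$ perturbed by the polarization of $\bar d\,y^3$. Useful consistency checks are provided by the moment identities $\sum_jA_j=|\DG|$ and $\sum_j jA_j=q|\DG|$, together with a Parseval-type second moment $\sum_{(u,a,b)}|S(u,a,b)|^2$; together with the finite set of possible weight values these pin down the multiplicities uniquely.
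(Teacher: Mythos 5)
The paper offers no proof of this lemma at all --- it is quoted from \cite{HKS} --- so your proposal has to be judged on its own merits. Your framework (reduce to the sums $i^u\sum_{x\in\Te}i^{\T(ax+2bx^3)}$, split on whether $a\in 2R$, and for $a\notin 2R$ normalize by a unit and use $\T(y)\equiv\tr(\bar y)+2Q(\bar y)\pmod 4$ with $Q$ the Kerdock quadratic form) is the standard route, and your treatment of the cases $a\in 2R$ and $a\notin 2R$, $\bar d=0$ is correct.

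The genuine gap is the claim that for $a\notin 2R$ and $\bar d\neq 0$ the real parts lie in $\{0,\pm\sqrt{2q}\}$. This is false: the hybrid sum $\sum_{y}i^{\tr(y)}(-1)^{Q(y)+\tr(\bar c y+\bar d y^3)}$ also takes values of modulus $\sqrt q$ with phase an odd multiple of $\pi/4$, giving real parts $\pm\sqrt{q/2}$. These are not a fringe case: the sub-case $\bar d=0$ supplies only the Kerdock contribution $2q(q-1)$ to each of the weights $q\pm\sqrt{q/2}$, whereas the full multiplicity is $2q(q-1)(q+4)/3$ (the printed $(q-1)2q(q+4)/2$ is a typo --- with it $\sum_jA_j\neq 4q^3=|\DG|$, which your own zeroth-moment check would detect; the valencies $|\R_2|+|\R_3|$ in Appendix A confirm the denominator $3$). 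So the majority of the weight-$(q\pm\sqrt{q/2})$ codewords come exactly from the case you assign to $\{0,\pm\sqrt{2q}\}$, and your case analysis, carried out as written, would contradict the lemma it is meant to prove. Evaluating this mixed quadratic-plus-cubic $\Z_4$-sum is the real content of \cite{HKS} and cannot be dispatched by juxtaposing the Gauss-sum and almost-bent spectra. Your fallback moment argument does not close the gap either: after imposing the symmetry $A_{q-w}=A_{q+w}$ there remain three unknown frequencies ($A_{q\pm\sqrt{2q}}$, $A_{q\pm\sqrt{q/2}}$, $A_q$), while the zeroth and second power moments give only two independent equations (the first moment is implied by the symmetry). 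One needs a fourth power moment --- i.e.\ sums such as $\sum_{a,b}|\xi(a,b)|^4$ and $\sum_{a,b}\Re\,\xi(a,b)^4$, which is essentially the computation the paper performs for the column $\N_8$ --- or, equivalently, the nontrivial fact that the Goethals code has minimum Lee distance $8$.
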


The quaternary Kerdock code $\mathcal{K}$ of length $q=2^m$ is
a subcode of $\mathcal{DG}$ generated by
\begin{eqnarray*}
  H_\mathcal{K}=\left[
    \begin{array}{llllll}
   1 &1 &1 &1 & \cdots &1\\
    0 &1 &\beta&\beta^2&\cdots&\beta^{2^m-2}\\
 \end{array}
  \right].
\end{eqnarray*}
Hence $$\mathcal{K}=\{\mathbf{c}(u,a,0)\mid u\in\Z_4,a\in R\}.$$
The quaternary Preparata code $\mathcal{P}$ is a code with parity-check matrix
$H_\mathcal{K}$. If $m$ is odd, it has minimum Lee weight 6.

\begin{lemma}\cite[Lemma 1]{HKS}
  Let $q=2^m$ where $m\ge 3$ is odd. The Lee weight distribution of the
  Kerdock code $\mathcal{K}$ is
  $$
  A_j=\left\{
    \begin{array}{ll}
      1, &\uif j=0 \textup{ or } 2q;\\
      2q(q-1), &\uif j=q\pm \sqrt{q/2};\\
      4q-2,&\uif j=q.
    \end{array}\right.
  \label{weightKerdock}
  $$
\end{lemma}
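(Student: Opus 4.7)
The plan is to apply formula~\eqref{weightDisCharValue} to $\mathbf{c}(u,a,0)$, so that
$w_L(\mathbf{c}(u,a,0))=q-\Re(i^uK(a))$ with $K(a):=\sum_{x\in\Te}i^{\T(ax)}$, and then to partition the $4q^2$ codewords according to whether $a=0$, $a\in 2\Te\setminus\{0\}$, or $a\in R^*$.

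The first two cases are quick. For $a=0$, $K(0)=q$ gives weights $0,q,2q,q$ as $u$ runs through $\Z_4$, producing the unique codewords of weights $0$ and $2q$. For $a=2B$ with $B\in\Te^*$, the fact that $Bx\in\Te$ implies $\T(2Bx)\equiv 2\tr(\mu(B)\mu(x))\pmod 4$, so $K(2B)=\sum_{x\in\Te}(-1)^{\tr(\mu(B)\mu(x))}=0$ by the nontriviality of the linear character $\tr(\mu(B)\,\cdot\,)$ on $\Fq$, and each of the $4(q-1)$ resulting codewords has Lee weight $q$.

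The essential case is $a\in R^*$, where I plan to show that $K(a)\in\{\pm\sqrt{q/2}\pm i\sqrt{q/2}\}$; granting this, the four values of $u$ give, for each unit $a$, precisely two codewords of weight $q+\sqrt{q/2}$ and two of weight $q-\sqrt{q/2}$, yielding the announced counts $2q(q-1)$ on each side. Together with the two codewords of weight $q$ from $a=0$, $u\in\{1,3\}$, this accounts for all $4q^2$ codewords. To pin down $K(a)$, I would combine two facts. First, since $i\equiv 1\pmod{1+i}$ in $\Z[i]$ and $|\Te|=q$ is even, $K(a)\equiv q\equiv 0\pmod{1+i}$, which forces the real and imaginary parts $r,s\in\Z$ of $K(a)$ to satisfy $r\equiv s\pmod 2$. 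Second, $|K(a)|^2=q$. Given both, iterated division by $2$ in the Diophantine equation $r^2+s^2=2^m$ (using that two odd squares sum to $2\pmod 8$, incompatible with $m\ge 3$ until the final step) leaves only $|r|=|s|=2^{(m-1)/2}=\sqrt{q/2}$.

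The main obstacle is establishing $|K(a)|^2=q$ for each individual unit $a$, since additive character orthogonality on $R$ only gives the aggregate identity $\sum_{b\in R^*}|K(b)|^2=q^2(q-1)$. My approach is to expand
\[
|K(a)|^2=\sum_{x,y\in\Te}i^{\T(a(x-y))},
\]
apply the Hensel decomposition $x-y=C(x,y)+2D(x,y)$ with $C,D\in\Te$ uniquely determined by $x,y$, and split the summand as $i^{\T(aC)}(-1)^{\tr(\mu(a)\mu(D))}$. Using the multiplicative invariance $K(va)=K(a)$ for $v\in\Te^*$ to normalize $a$ to lie in $1+2\Te$, and then swapping the order of summation over $x,y$ with the inner linear character of $\Fq$, the expression should collapse to $q$ via a character sum identity. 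Carrying out this Hensel bookkeeping cleanly, with the explicit dependence of $C$ and $D$ on the additive structure of the Teichmüller lift, is what I expect to be the technical heart of the argument.
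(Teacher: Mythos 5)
The paper gives no proof of this lemma at all --- it is imported verbatim as \cite[Lemma 1]{HKS} --- so your self-contained derivation is necessarily a different route, and it is correct. Your three-way split of $a$ ($a=0$, $a\in 2R\setminus\{0\}$, $a\in R^*$) accounts for all $4q^2$ codewords with the stated multiplicities; the reduction $\T(2BX)\equiv 2\tr(\mu(B)\mu(X))\pmod 4$ is valid because $\Te$ is closed under multiplication and $\mu$ intertwines $\T$ with $\tr$; and the $2$-adic descent on $r^2+s^2=2^m$ ($m$ odd) does force $|r|=|s|=\sqrt{q/2}$, although your $(1+i)$-divisibility observation is redundant, since $r\equiv s\pmod 2$ already follows from $r^2+s^2$ being even. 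The one step you leave schematic, $|K(a)|^2=q$ for each unit $a$, goes through exactly as you indicate, and the Hensel bookkeeping you are worried about is supplied by the paper's own Lemma~\ref{equivalency}: writing $X-Y=C+2D$ with $C,D\in\Te$ for $X\neq Y$, that lemma gives $c=x+y$ and $d^2=y^2+xy=yc$ (lower case denoting reduction modulo $2$), so for each fixed $C\in\Te$ with $c\neq 0$ the map $Y\mapsto d$ is a bijection of $\Fq$ and
\[
|K(a)|^2=q+\sum_{C\in\Te\setminus\{0\}}i^{\T(aC)}\sum_{d\in\Fq}(-1)^{\tr(\mu(a)d)}=q,
\]
the inner sum vanishing because $\mu(a)\neq 0$ when $a$ is a unit; your normalization of $a$ into $1+2\Te$ is not actually needed. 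This is the same squaring technique the paper itself deploys in Appendix C (Lemma~\ref{lemma:eta}) for the analogous sums with the cubic term present, so your argument is consistent in spirit with the paper's machinery even though the paper chose to cite the weight distribution rather than prove it.
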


The following relations on the Kerdock code $\mathcal{K}$ will determine an
abelian $4$ class association scheme:
\begin{equation}
(x,y)\in\left\{
  \begin{array}{ll}
    R_0, &\uif w_L(x-y)= 0;\\
    R_1, &\uif w_L(x-y)= q-\sqrt{q/2};\\
    R_2, &\uif w_L(x-y)= q;\\
    R_3, &\uif w_L(x-y)= q+\sqrt{q/2};\\
    R_4, &\uif w_L(x-y)= 2q.\\
  \end{array}
  \right.
  \label{relationKerdock}
\end{equation}

\begin{theorem}
  The relations (\ref{relationKerdock}) on the codewords of quaternary Kerdock
  code define a $4$ class abelian association scheme.
  \label{assoScheKer}
\end{theorem}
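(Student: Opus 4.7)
The plan is to verify that the Lee-weight partition of $\mathcal{K}$ satisfies the translation-scheme condition via a character computation. Identify $\mathcal{K}$ with $G := \Z_4 \times R$ through the additive isomorphism $(u,a) \mapsto \mathbf{c}(u,a,0)$, which is injective because $\mathbf{c}(u,a,0)_0 = u$ recovers $u$ and the non-degeneracy of $\T$ on $R$ then recovers $a$. Let $S_j \subset G$ denote the preimage of the relation class $R_j$, so in particular $S_0 = \{(0,0)\}$, $S_4 = \{(2,0)\}$, while $S_1, S_2, S_3$ are indexed by weights $q - \sqrt{q/2}, q, q + \sqrt{q/2}$. By the standard criterion for translation schemes, it suffices to show that the character sums $\chi_{v,b}(S_j)$ with $\chi_{v,b}(u,a) := i^{uv + \T(ab)}$ split the dual group $\hat{G} \cong \Z_4 \times R$ into exactly five equivalence classes under the relation $\chi \sim \chi'$ iff $\chi(S_j) = \chi'(S_j)$ for every $j$.

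By~(\ref{weightDisCharValue}), membership in $S_j$ is equivalent to $\Re(i^u K(a)) = q - w_j$ where $K(a) := \sum_{x\in\Te} i^{\T(ax)}$. As $u$ traverses $\Z_4$, the four values $\Re(i^u K(a))$ are $\{\pm\Re K(a), \pm\Im K(a)\}$, so Lemma~\ref{weightKerdock} forces $(\Re K(a), \Im K(a)) \in \{0, \pm\sqrt{q/2}\}^2$ for all $a \neq 0$, together with $K(0) = q$. This stratifies $R$ into a short list of types $\tau$, and on each type the distribution of Lee weights across $u \in \Z_4$ is fixed. Consequently $\chi_{v,b}(S_j)$ decomposes as a sum over strata of (an elementary $\Z_4$-character factor depending only on $v$ and $\tau$) times the outer stratum sum $\sum_{a\,:\,\mathrm{type}(a) = \tau} i^{\T(ab)}$.

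Each stratum sum can be written as a $\C$-linear combination of Galois-ring exponential sums $\sum_a K(a)^r\overline{K(a)}^s\,i^{\T(ab)}$ for small $r,s$; expanding $K(a)^r\overline{K(a)}^s$ and applying orthogonality of characters of $R$ with respect to the non-degenerate trace pairing reduces these to explicit functions of $\Re K(-b)$, $\Im K(-b)$, and $[b=0]$. Hence the profile $(\chi_{v,b}(S_j))_{j=0}^4$ depends on $(v,b)$ only through $v$ and the stratum of $-b$, and a short case check isolates precisely five distinct profile vectors. The Bannai--Muzychuk criterion then certifies the scheme structure and simultaneously describes the dual scheme. The main obstacle is the careful bookkeeping in the stratum-sum evaluation; however, this is a simpler scalar analogue of the computation carried out in Section~\ref{sec:completion} for the Delsarte-Goethals case, and it parallels the Kerdock-scheme analysis of~\cite{ABS}.
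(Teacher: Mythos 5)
Your argument is sound in outline, but it takes a genuinely different route from the paper: the paper gives no computational proof of this theorem at all, instead disposing of it in one line by observing that the scheme is the dual of the translation scheme constructed in \cite[Proposition~6]{BD}, so the result follows from the general duality theory of Schur rings. What you propose is a self-contained verification by the dual-partition criterion: stratify $R$ by the value of $K(a)=\sum_{x\in\Te}i^{\T(ax)}$ (which the Lee weight distribution forces into $\{0\}\cup\{\pm\sqrt{q/2}\pm i\sqrt{q/2}\}$ for $a\neq 0$, the two coordinates vanishing simultaneously), express the indicator of each stratum as a polynomial in $K(a)$ and $\overline{K(a)}$, and evaluate the moment sums $\sum_a K(a)^r\overline{K(a)}^s\,i^{\T(ab)}$ by orthogonality, which reduces them to the counting statements of Lemma~\ref{squareCount}. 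This is precisely the method the authors deploy in Section~\ref{sec:completion} for the Delsarte--Goethals code (their matrices $T$ and $\mathfrak{T}$ are the two steps you describe), specialized to the much easier Kerdock case. What your route buys is independence from \cite{BD} and an explicit description of the dual partition and eigenmatrices as a byproduct; what the paper's route buys is brevity. Two small points to tighten if you carry this out: the final step is not merely ``careful bookkeeping''--- you must actually exhibit that the $4\times(\hbox{number of strata of }b)$ candidate profiles collapse to exactly five, no more and no fewer, since the criterion fails if the dual partition is strictly finer; and your closing appeal to the Bannai--Muzychuk criterion should be read as applying it to the full group scheme of $\Z_4\times R$ (whose first eigenmatrix is the character table), which is indeed equivalent to the dual-partition criterion you state at the outset.
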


The above scheme is just the dual of the scheme constructed in
\cite[Proposition 6]{BD}.

\section{Schemes related to the $\mathcal{DG}$ code}\label{sec:descriptionOfDGSchemes}
It is natural to ask whether the obvious generalizations of the relations~(\ref{relationKerdock})
on the Delsarte-Goethals code $\mathcal{DG}$ will also give an abelian association
scheme or not. It turns out that the answer is no.
In order to get an association scheme, we should modify the relations a little:
\begin{equation}
(x,y)\in\left\{
  \begin{array}{ll}
    S_0, &\uif w_L(x-y)= 0;\\
    S_1, &\uif w_L(x-y)= q-\sqrt{2q};\\
    S_2, &\uif w_L(x-y)= q-\sqrt{q/2};\\
    S_3, &\uif w_L(x-y)= q \textup{ and } x-y\in\mathcal{K};\\
    S_4, &\uif w_L(x-y)= q \textup{ and } x-y\notin\mathcal{K};\\
    S_5, &\uif w_L(x-y)= q+\sqrt{q/2};\\
    S_6, &\uif w_L(x-y)= q+\sqrt{2q};\\
    S_7, &\uif w_L(x-y)= 2q.\\
  \end{array}
  \right.
  \label{relation7class}
\end{equation}


\begin{theorem}
  The relations (\ref{relation7class}) on the codewords of
  quaternary Delsarte-Goethals code define a $7$ class
  abelian association scheme $\mathfrak{A}$. The first and the
  second eigenmatrices are given in Appendix A.
  \label{thm:assoSche7class}
\end{theorem}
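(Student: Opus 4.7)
The plan is to obtain $\mathfrak{A}$ as a fusion of a finer $9$-class translation scheme $\mathfrak{B}$ on $\mathcal{DG}$ and then to invoke the Bannai--Muzychuk criterion. I would introduce $\mathfrak{B}$ by further subdividing the Lee-weight $q\pm\sqrt{q/2}$ classes $S_2$ and $S_5$ according to membership in the Kerdock subcode $\mathcal{K}$: set $R_2=S_2\cap\mathcal{K}$, $R_3=S_2\setminus\mathcal{K}$, $R_6=S_5\cap\mathcal{K}$, $R_7=S_5\setminus\mathcal{K}$, and let the remaining non-identity classes of $\mathfrak{B}$ coincide with the corresponding classes of $\mathfrak{A}$. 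The $7$-class partition of $\mathfrak{A}$ is then the fusion of this $10$-relation partition of $\mathfrak{B}$ under $\Lambda=\{\{0\},\{1\},\{2,3\},\{4\},\{5\},\{6,7\},\{8\},\{9\}\}$.

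Assuming $\mathfrak{B}$ is established as a translation association scheme with explicitly known first eigenmatrix $P_\mathfrak{B}$ --- this is the main theorem of the paper, proved in Section~\ref{sec:completion} --- I would verify the Bannai--Muzychuk fusion criterion: exhibit a matching dual partition $\{\Delta_i\}$ of $\{0,1,\ldots,9\}$ and check that every $(\Delta_i,\Lambda_j)$-block of $P_\mathfrak{B}$ has a constant row sum. These constant row sums then assemble into the first eigenmatrix of $\mathfrak{A}$ displayed in Appendix~A, and the second eigenmatrix follows from $P_\mathfrak{A}Q_\mathfrak{A}=|\mathcal{DG}|I$. Since only the column pairs $(2,3)$ and $(6,7)$ are being merged, this reduces to a direct arithmetic check on the closed-form entries of $P_\mathfrak{B}$.

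The real difficulty --- which the present theorem delegates to Section~\ref{sec:completion} --- lies in establishing $\mathfrak{B}$ itself. As a candidate translation scheme on the additive group of $\mathcal{DG}$, this reduces to showing that for every character $\chi\in\widehat{\mathcal{DG}}$ the value-vector $(\chi(R_0),\ldots,\chi(R_9))$ depends only on an equivalence class of $\chi$. Via the identification $\widehat{\mathcal{DG}}\cong\Z_4^{q}/\mathcal{G}$ and the trace description (\ref{weightDisCharValue}) of $\mathcal{DG}$, each such $\chi(R_i)$ becomes an incomplete exponential sum over $\Te$ whose summand involves $i^{\T(\cdot)}$-terms over $R=GR(4,m)$, constrained by the Lee-weight and $\mathcal{K}$-membership conditions defining $R_i$. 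Evaluating these sums for all parameter choices and organizing them into a single dual partition compatible with all ten relations is where the heavy Weil-sum calculation sits; once that is completed, the present theorem drops out as a short bookkeeping corollary.
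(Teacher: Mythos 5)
Your proposal matches the paper's own argument: the authors likewise treat $\mathfrak{A}$ as the fusion of the $9$-class scheme $\mathfrak{B}$ obtained by merging the two Kerdock/non-Kerdock subclasses at each of the weights $q\pm\sqrt{q/2}$, and deduce the theorem from the Bannai--Muzychuk criterion applied to the explicit eigenmatrix of $\mathfrak{B}$, with all the exponential-sum work deferred to the main theorem of Section~\ref{sec:completion}. This is essentially the same proof, so no further comparison is needed.
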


This is not the end of the story. Actually we can provide a more refined
description of relations (\ref{relation7class}) to get an abelian $9$ class
association scheme as follows:
\begin{equation}
(x,y)\in\left\{
  \begin{array}{ll}
    S_0, &\uif w_L(x-y)= 0;\\
    S_1, &\uif w_L(x-y)= q-\sqrt{2q};\\
    S'_{21}, &\uif w_L(x-y)= q-\sqrt{q/2} \textup{ and } x-y\in\mathcal{K};\\
    S'_{22}, &\uif w_L(x-y)= q-\sqrt{q/2} \textup{ and } x-y\notin\mathcal{K};\\
    S_3, &\uif w_L(x-y)= q \textup{ and } x-y\in\mathcal{K};\\
    S_4, &\uif w_L(x-y)= q \textup{ and } x-y\notin\mathcal{K};\\
    S'_{51}, &\uif w_L(x-y)= q+\sqrt{q/2} \textup{ and } x-y\notin\mathcal{K};\\
    S'_{52}, &\uif w_L(x-y)= q+\sqrt{q/2} \textup{ and } x-y\in\mathcal{K};\\
    S_6, &\uif w_L(x-y)= q+\sqrt{2q};\\
    S_7, &\uif w_L(x-y)= 2q.\\
  \end{array}
  \right.
  \label{relation9class}
\end{equation}

\begin{theorem}
  The relations (\ref{relation9class}) on the codewords of quaternary Delsarte-Goethals
  code define a $9$ class abelian association scheme $\mathfrak{B}$. The first and the
  second eigenmatrices are given in Appendix A.
  \label{thm:assoSche9class}
\end{theorem}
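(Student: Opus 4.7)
The plan is to verify the scheme property via the standard character-sum criterion for translation schemes. Since $\DG$ is an abelian group under addition and the relations in~(\ref{relation9class}) are translation-invariant, we identify each relation with the subset $S_i\subseteq\DG$ of differences, and the partition gives a $9$-class translation scheme if and only if the equivalence relation on $\widehat{\DG}$ defined by $\chi\sim\chi'\Leftrightarrow\chi(S_i)=\chi'(S_i)$ for all $i$ has exactly $10$ classes. The same computation then yields the dual scheme and the second eigenmatrix $Q$ of $\mathfrak{B}$.

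First I would parameterize $\widehat{\DG}\cong\Z_4^q/\mathcal{G}$. Using the parity-check matrix $H_{\mathcal{G}}$, coset representatives can be chosen in parallel with the trace description of $\DG$ itself, so that each character takes the form $\chi_w(\mathbf{c}(u,a,b))=i^{\langle w,\mathbf{c}(u,a,b)\rangle}$ for an explicit representative $w$. The class sum $\chi_w(S_i)=\sum_{\mathbf{c}\in S_i}\chi_w(\mathbf{c})$ is then expanded by orthogonality: each Lee-weight indicator becomes a character sum over $\Z_4$ applied to~(\ref{weightDisCharValue}), while the indicator of $\mathcal{K}$-membership, which is precisely what separates $S_3$ from $S_4$ and the refined pairs $\{S'_{21},S'_{22}\}$, $\{S'_{51},S'_{52}\}$, is handled by a short character sum over $\DG/\mathcal{K}$, naturally parameterized by the $b$-coordinate.

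The heart of the argument is the evaluation of the resulting multiple exponential sums: after these reductions each $\chi_w(S_i)$ takes the shape
\[
\sum_{(u,a,b)\in G}\;\sum_{x,y\in\Te} i^{\T(\cdots)}\,,
\]
whose inner pieces factor, through~(\ref{weightDisCharValue}), into the fourth-power sums $\sum_{x\in\Te} i^{\T(ax+2bx^3)}$ whose moduli control the Lee weights of $\DG$. The Kerdock subcase $b=0$ reduces cleanly to Gauss-type sums over $\Fq$ via $\mu$, whereas the case $b\neq 0$ requires the full theory of such sums over the Galois ring $R$. The main obstacle is the sheer bookkeeping: one must arrange all these exponential-sum values into a table, indexed by the type of $w$, and verify that exactly ten distinct column-profiles emerge -- this is the "complicated exponential sums and heavy computations" the authors allude to.

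Once the table is in hand, it gives the equivalence classes $D_0,\ldots,D_9$ of $\widehat{\DG}$ and, after normalizing by $|D_j|$, the entries of $Q$. The principal character supplies the sanity check $\chi_0(S_i)=|S_i|$ against the weight distributions of $\DG$ and $\mathcal{K}$, and the first eigenmatrix $P$ is then recovered from $PQ=|\DG|\,I$.
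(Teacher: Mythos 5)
Your framework coincides with the paper's. The paper proves the isomorphic statement for the translation scheme $\mathfrak{B}'$ on $G=\Z_4\times R\times\Fq$ (Theorem~\ref{main}): for each character $\chi_{(v,c,d)}$ it evaluates the moment sums $\N_{2i}=\sum S(u,a,b)^i[(u,a,b)]$ and their restrictions to $b=0$ (the latter being exactly your character-sum indicator of membership in $\mathcal{K}$), inverts the matrix $T$ to recover the values $\chi(\R_i)$, and applies the Bannai--Muzychuk criterion. Your plan --- compute the profiles $(\chi_w(S_0),\dots,\chi_w(S_7))$ directly on $\widehat{\DG}\cong\Z_4^q/\mathcal{G}$ and count dual classes --- is the same computation in slightly different clothing, and your criterion (the scheme property is equivalent to the dual equivalence relation having exactly ten classes) is a correct form of the duality test, since the ten class sums are linearly independent and hence always produce at least ten distinct profiles.

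The genuine gap is that the step ``verify that exactly ten distinct column-profiles emerge'' is not a step of the proof; it \emph{is} the proof, and nothing in your proposal makes it plausible that the count is ten rather than more. Concretely, one must evaluate $\sum_{(u,a,b)}S(u,a,b)^k\,i^{uv+\T(ac+2bd)}$ for $k\le 4$ (and the same sums restricted to $b=0$). For $k\le 2$ this follows from the multiset structure of $\Te\pm\Te$ (Lemma~\ref{squareCount}), but for $k=3$ one needs solution counts for systems such as $X+Y-Z=A+2B$, $x^3+y^3+z^3=a^3+b^3d$, which rest on the splitting behaviour of $x^3+x+a$ over $\Fq$ and the cardinalities $|M_0|,|M_1|,|M_3|$ (Lemma~\ref{cubicSolutions}), and for $k=4$ one needs the identities $\eta_a^4=2^m(-1)^{\tr(u^3)}|F_u|\,\eta_a\overline{\eta_a}$ etc.\ (Lemma~\ref{lemma:eta}) together with the evaluations of $\mathbf{E}(c,d)$ and $\mathbf{F}(c,d)$ in Lemmas~\ref{lemma:N_8_1}--\ref{lemma:N_8_3}. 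None of this is supplied or even sketched, so the dual partition $\E_0,\dots,\E_9$ is never exhibited, the entries of $Q$ are never produced, and the assertion of the theorem remains unverified. Until those exponential-sum evaluations are carried out, the proposal establishes only the (correct) reduction of the theorem to them.
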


We leave the proofs of the above two theorems to the end
of this section, since they are immediate outcomes of
Theorem~\ref{main}.
Recall the trace description of the Delsarte-Goethals code $\mathcal{DG}$,
we know there is a one-to-one correspondence between the codewords of $\mathcal{DG}$
and the set $\Z_4\times R\times\Te$ given by
$(u,a,b)\longleftrightarrow \mathbf{c}(u,a,b).$
Since $\mu(\Te)=\Fq$, there is a group isomorphism between the group
$G=\Z_4\times R\times \Fq$ and the $\mathcal{DG}$ code.
For $(u,a,b)\in G$, we introduce an exponential sum
$$S(u,a,b)=\sum_{X\in\Te}i^{u+\T(aX+2bX^3)}+\sum_{X\in\Te}i^{-u-\T(aX+2bX^3)},$$
here we have identified the element $b\in\Fq$ with its pre-image $\mu^{-1}(b)\in\Te$.
Now Eqn. (\ref{weightDisCharValue}) becomes
\begin{equation}
w_L(\mathbf{c}(u,a,b))=q-S(u,a,b)/2.
  \label{weightDisCharValue2}
\end{equation}
So we can see that for $(u,a,b)\in G,$
$$
S(u,a,b)\in\{\pm2q,\pm2\sqrt{2q},\pm\sqrt{2q},0\},
$$
from the weight distribution of Delsarte-Goethals code.
According to the value of $S(u,a,b)$, we give a partition of $G$ into ten parts
as follows:
$$
\begin{array}{l}
  \R_0=\{(u,a,b)\in G\mid S(u,a,b)=2q\}=\{(0,0,0)\},\\
  \R_1=\{(u,a,b)\in G\mid S(u,a,b)=2\sqrt{2q}\},\\
  \R_2=\{(u,a,b)\in G\mid S(u,a,b)=\sqrt{2q}, b=0\},\\
  \R_3=\{(u,a,b)\in G\mid S(u,a,b)=\sqrt{2q}, b\neq0\},\\
  \R_4=\{(u,a,b)\in G\mid S(u,a,b)=0, b=0\},\\
  \R_5=\{(u,a,b)\in G\mid S(u,a,b)=0, b\neq0\},\\
  \R_6=\{(u,a,b)\in G\mid S(u,a,b)=-\sqrt{2q}, b\neq0\},\\
  \R_7=\{(u,a,b)\in G\mid S(u,a,b)=-\sqrt{2q}, b=0\},\\
  \R_8=\{(u,a,b)\in G\mid S(u,a,b)=-2\sqrt{2q}\},\textup{  and }\\
  \R_9=\{(u,a,b)\in G\mid S(u,a,b)=-2q\}=\{(2,0,0)\}.\\\
\end{array}
$$
Since the group $G$ is abelian, its character group $\widehat{G}\cong G.$
In order to describe the dual association scheme, we shall first provide the
dual partition of the group $\widehat{G}=G$.
For convenience, we will use capital letters such as $X,Y$ to denote elements in $\Te$,
and the corresponding lower cases  to represent their respective projections modulo $2$ in $\Fq$.
The dual partition is as follows:
\begin{eqnarray*}
  \E_0&=& \left\{ (0,0,0) \right\},\\
  \E_1&=& \left\{ (0,0,r)\mid r\in \Fq^* \right\},\\
  \E_2&=& \left\{ (1,X,x^3)\mid X\in\Te\right\}\cup\left\{ (3,-X,x^3)\mid X\in\Te \right\},\\
  \E_3&=& (\{(1,X,r)\mid X\in\Te,r\in\Fq\}\cup\{ (3,-X,r)\mid X\in\Te,r\in\Fq\})\backslash \E_2,\\
  \E_4&=& \quad\{(0,-X+Y,x^3+y^3)\mid X,Y\in\Te,X\neq Y\}\\
     && \cup\ \{(2,X+Y,x^3+y^3)\mid X,Y\in\Te\}\\
     && \cup\ \{(2,-X-Y,x^3+y^3)\mid X,Y\in\Te\},\\
  \E_5&=&(\{(0,S,r)\mid S\in R\backslash 2R,r\in\Fq\}\cup\{(2,S,r)\mid S\in R,r\in\Fq\})\backslash \E_4,\\
  \E_6&=&\quad \{(1,2X-Y,y^3)\mid X,Y\in\Te,X\neq Y\}\\
    &&\cup\ \{(1,-X-Y-Z,x^3+y^3+z^3)\mid X,Y,Z\in\Te\textup{ are pairwise distinct}\}\\
    &&\cup\ \{(1,X+Y-Z,x^3+y^3+z^3)\mid X,Y,Z\in\Te\textup{ are pairwise distinct}\}\\
    &&\cup\ \{(3,2X+Y,y^3)\mid X,Y\in\Te,X\neq Y\}\\
    &&\cup\ \{(3,X+Y+Z,x^3+y^3+z^3)\mid X,Y,Z\in\Te\textup{ are pairwise distinct}\}\\
    &&\cup\ \{(3,-X-Y+Z,x^3+y^3+z^3)\mid X,Y,Z\in\Te\textup{ are pairwise distinct}\},
\end{eqnarray*}
\begin{eqnarray*}
  \E_7&=&\quad \{(1,2X-Y,r)\mid X,Y\in\Te,X\neq Y,r\neq y^3\}\\
    &&\cup\ \{(1,-X-Y-Z,r)\mid X,Y,Z\in\Te\textup{ are pairwise distinct},r\neq x^3+y^3+z^3\}\\
    &&\cup\ \{(1,X+Y-Z,r)\mid X,Y,Z\in\Te\textup{ are pairwise distinct},r\neq x^3+y^3+z^3\}\\
    &&\cup\ \{(3,2X+Y,r)\mid X,Y\in\Te,X\neq Y,r\neq y^3\}\\
    &&\cup\ \{(3,X+Y+Z,r)\mid X,Y,Z\in\Te\textup{ are pairwise distinct},r\neq x^3+y^3+z^3\}\\
    &&\cup\ \{(3,-X-Y+Z,r)\mid X,Y,Z\in\Te\textup{ are pairwise distinct},r\neq x^3+y^3+z^3\},\\
  \E_8&=& \{(0,2X,\sum_{i=1}^4y_i^3)|X\in\Tem,Y_i\in\Te,2X=\sum_{i=1}^4Y_i \textup{ or } 2X=Y_1+Y_2-Y_3-Y_4\},\textup{ and}\\
  \E_9&=&\{(0,2X,r)\mid X\in\Tem, r\in \Fq\}\backslash \E_8.
\end{eqnarray*}

\noindent{\bf Remark:} Let $m$ be an odd integer. In the study of
dimensional dual hyperovals, Pasini and Yoshiara
\cite[Proposition 1.7]{PY} showed that the
Cayley graph of the following set
\[
S=\{(1,x,x^3)|\,x\in\Fq\}\subset \Z_2\times \Fq\times \Fq
\]
is a distance regular graph of diameter $4$. This is analogous to our set
$\E_2$ above.

\begin{theorem}\label{main}
  Let $G=\Z_4\times R\times\Fq$, and define the binary relations
  $R_i=\{(g,h)\mid g-h\in \R_i\}$ for $i=0,\ldots,9$. Then
  $\mathfrak{B}'=(G;R_i,0\leq i\leq9)$ is a $9$-class association scheme,
  with the first and the second eigenmatrices given by $P$ and $Q$
  (listed in Appendix A). The binary relations $R'_i=\{(g,h)\mid g-h\in \E_i\}$
  for $i=0,\ldots,9$ define an association scheme which
  is dual to $\mathfrak{B}'$, so it has the first
  and second eigenmatrices: $P'=Q,Q'=P.$
\end{theorem}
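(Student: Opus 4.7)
The plan is to apply the standard character-theoretic criterion for translation schemes: identifying $\widehat{G}$ with $G$ via
\[
\chi_{v,c,d}(u,a,b)\;=\;i^{uv+\T(ac)}(-1)^{\tr(bd)},
\]
the pair $(G,\{\R_i\})$ is a translation association scheme whose dual is $(G,\{\E_j\})$ if and only if the character sum $\chi_{v,c,d}(\R_i)=\sum_{(u,a,b)\in\R_i}\chi_{v,c,d}(u,a,b)$ depends only on the index $j$ with $(v,c,d)\in\E_j$, and the ten resulting row-vectors (indexed by $j$) are distinct. Both the original scheme, its dual, and the first eigenmatrix $P$ are then obtained simultaneously.

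To carry this out, I would compute the moments
\[
M_k(v,c,d)\;=\;\sum_{(u,a,b)\in G}\chi_{v,c,d}(u,a,b)\,S(u,a,b)^k\qquad(0\le k\le 4),
\]
together with their restrictions to $b=0$. Since $S$ takes only the seven values $\pm 2q,\pm2\stq,\pm\stq,0$, and the partition $\{\R_i\}$ splits the $S=\pm\stq$ and $S=0$ levels only according to whether $b=0$, each $\chi_{v,c,d}(\R_i)$ is recoverable from $M_0,\ldots,M_4$ by inverting a Vandermonde-type linear system. Expanding $S^k$ as a $k$-fold sum and using $\T(2z)=2\tr(\mu(z))$ turns $M_k(v,c,d)$ into a sum over sign patterns $\epsilon\in\{\pm1\}^k$ and tuples $(X_1,\ldots,X_k)\in\Te^k$ of an inner character sum over $(u,a,b)$; orthogonality of characters on $\Z_4\times R\times\Fq$ collapses the latter to $4q^2$ times the indicator that $v+\sum_j\epsilon_j\equiv0\pmod 4$, $c+\sum_j\epsilon_j X_j=0$ in $R$, and $d+\sum_j\epsilon_j x_j^3=0$ in $\Fq$.

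This immediately reveals why the dual classes $\E_j$ are defined the way they are: $k=1$ produces the one-variable representations $(\pm X,x^3)$ defining $\E_2$; $k=2$ produces the two-term sums $(\pm X\pm Y,x^3+y^3)$ defining $\E_4$; $k=3$ produces the signed three-term expressions in $\E_6$; and $k=4$ produces the four-cube conditions in $\E_8$, with the residue $\sum_j\epsilon_j\pmod 4$ forcing the $\Z_4$-component $v$ into the correct parity at each level. The complementary classes $\E_3,\E_5,\E_7,\E_9$ collect the ``unrepresented'' cases. Once one proves that $(v,c,d)\in\E_j$ is equivalent to prescribed values of the representation counts $N_k(v,c,d)=M_k/(4q^2)$, the linear system $\sum_i \chi(\R_i)\,s_i^k=M_k$ (taken for each class $\E_j$ and supplemented by the $b=0$ moments) has a unique solution producing the $j$-th row of $P$; inspection then gives the matrix listed in Appendix~A and confirms that the ten rows are mutually distinct.

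The main obstacle is the fourth moment $M_4$: one must count, for each $(v,c,d)$ with $v$ even, the number of quadruples $(X_1,X_2,X_3,X_4)\in\Te^4$ and signs $(\epsilon_1,\ldots,\epsilon_4)\in\{\pm1\}^4$ with $\sum_j\epsilon_j X_j=-c$ in $R$ and $\sum_j\epsilon_j x_j^3=-d$ in $\Fq$. When $c\in 2R\setminus\{0\}$, the analysis of representations $2X=Y_1+Y_2+Y_3+Y_4$ or $2X=Y_1+Y_2-Y_3-Y_4$ in $\Te$ paired with their cubic counterparts in $\Fq$ is precisely what produces the split of $\{(0,2X,r)\mid X\in\Tem\}$ into $\E_8$ and $\E_9$. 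This step requires the same family of exponential-sum identities over the Teichm\"uller set that underlies the Delsarte--Goethals weight enumerator in Lemma~\ref{weightGoethals}, and accounts for the bulk of the computations deferred to Section~\ref{sec:completion}.
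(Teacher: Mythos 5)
Your proposal is correct and follows essentially the same route as the paper: your moments $M_k(v,c,d)$ and their $b=0$ restrictions are exactly the group-ring elements $\N_{2k}$ and $\N_{2k+1}$ evaluated at characters, your Vandermonde-type inversion is the paper's explicit matrix $T$, and the orthogonality/representation-count analysis (with the fourth moment as the hard case) is precisely the content of Section~\ref{sec:completion} and the appendices. The paper closes the argument by the Bannai--Muzychuk criterion rather than by checking that the ten rows are distinct, but these are interchangeable here.
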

\begin{proof}
Denote $s=\sqrt{2q}.$ The element of the group ring $\mathbb{C}G$ will
be written as $\sum_{(u,a,b)\in G}c(u,a,b)[(u,a,b)]$, where
$c(u,a,b)\in \mathbb{C}$. We set
\begin{eqnarray*}
  \N_{2i}&=& \sum_{(u,a,b)\in G}S(u,a,b)^i[(u,a,b)], \textup{ and}\\
  \N_{2i+1}&=& \sum_{(u,a,b)\in G\atop b=0}S(u,a,b)^i[(u,a,b)],
\end{eqnarray*}
for $i=0,1,\ldots,4.$ The above transformation can be written in the matrix form as
$$
(\N_0,\N_1,\ldots,\N_9)=(\R_0,\R_1,\ldots,\R_9)\,T,
$$
where
\begin{small}
$$
T=\left( \begin {array}{cccccccccc} 1&1&{s}^{2}&{s}^{2}&{s}^{4}&{s}^{4}
&{s}^{6}&{s}^{6}&{s}^{8}&{s}^{8}\\ \noalign{\medskip}1&0&2\,s&0&4\,{s}
^{2}&0&8\,{s}^{3}&0&16\,{s}^{4}&0\\ \noalign{\medskip}1&1&s&s&{s}^{2}&
{s}^{2}&{s}^{3}&{s}^{3}&{s}^{4}&{s}^{4}\\ \noalign{\medskip}1&0&s&0&{s
}^{2}&0&{s}^{3}&0&{s}^{4}&0\\ \noalign{\medskip}1&1&0&0&0&0&0&0&0&0
\\ \noalign{\medskip}1&0&0&0&0&0&0&0&0&0\\ \noalign{\medskip}1&0&-s&0&
{s}^{2}&0&-{s}^{3}&0&{s}^{4}&0\\ \noalign{\medskip}1&1&-s&-s&{s}^{2}&{
s}^{2}&-{s}^{3}&-{s}^{3}&{s}^{4}&{s}^{4}\\ \noalign{\medskip}1&0&-2\,s
&0&4\,{s}^{2}&0&-8\,{s}^{3}&0&16\,{s}^{4}&0\\ \noalign{\medskip}1&1&-{
s}^{2}&-{s}^{2}&{s}^{4}&{s}^{4}&-{s}^{6}&-{s}^{6}&{s}^{8}&{s}^{8}
\end {array} \right).
$$
\end{small}
Using Maple, it is easy to compute that
$\det(T)=-1152\,{s}^{23} \left( s-1 \right) ^{2} \left( s+1 \right) ^{2},$
so $T$ is invertible.
The key step in our proof is the completion of the
following character table $\mathfrak{T}$:
\renewcommand{\kbldelim}{(}
\renewcommand{\kbrdelim}{)}
\textrm{\begin{tiny}
$$
\kbordermatrix{\mbox{}
&\N_0&\N_1&\N_2&\N_3&\N_4&\N_5&\N_6&\N_7&\N_8&\N_9\\
\E_0&4\,{q}^{3}&4\,{q}^{2}&0&0&8\,{q}^{4
}&8\,{q}^{3}&0&0&16\,{q}^{4}\left( 3\,q-1 \right)
&16\,{q}^{3} \left( 3\,q-1 \right) \\
\E_1&   0&4\,{q}^{2}&0&0&0&8\,{q}^
{3}&0&0&0&16\,{q}^{3} \left( 3\,q-1 \right) \\
\E_2&  0&0&4
\,{q}^{3}&4\,{q}^{2}&0&0&8\,{q}^{3} \left( 3\,q-1 \right) &8\,{q}^{2}
 \left( 3\,q-1 \right) &0&0\\
\E_3&   0&0&0&4\,{q}^{2}&0&0&0
&8\,{q}^{2} \left( 3\,q-1 \right) &0&0\\
\E_4&  0&0&0&0&8\,
{q}^{3}&8\,{q}^{2}&0&0&32\,{q}^{3} \left( 3\,q-2 \right)&32\,{q}^{4}\\
\E_5&  0&0&0&0&0&8\,{q}^{2}&0&0&32\,{q}^{3}
 \left( q-2 \right)&32\,{q}^{4} \\
\E_6&   0&0&0&0&0&0&24\,{q}^{3}&8\,{q
}^{2} \left( 2\,q-1 \right) &0&0\\
\E_7&  0&0&0&0&0&0&0&8\,
{q}^{2} \left( 2\,q-1 \right) &0&0\\
\E_8&  0&0&0&0&0&0&0&0
&48\,{q}^{4}&16\,{q}^{3} \left( 2\,q-1 \right) \\
\E_9&  0&0
&0&0&0&0&0&0&0&16\,{q}^{3} \left( 2\,q-1 \right) \\
}.
$$
\end{tiny}}

The next section is devoted to the computation of this table. Taking the
column indexed by $\N_6$ as an illustration,  we can see that
\begin{eqnarray*}
g( \N_6)&=&\left\{
\begin{array}{ll}
    8\,{q}^{3} \left( 3\,q-1 \right), &\uif g\in\E_2;\\
    24\,{q}^{3}, &\uif g\in\E_6;\\
    0, &\textup{ otherwise}.
  \end{array}
  \right.
\end{eqnarray*}
Thus the character table $P$ is obtained by multiplying the above
character table $\mathfrak{T}$ with the matrix $T^{-1}$ from the left.
Now the assertion that $\mathfrak{B}'=(G;R_i,0\leq i\leq9)$ is a
$9$-class association scheme is an immediate consequence of the
Bannai-Muzychuk criterion \cite{Ban,Mu}.
\end{proof}\qed

\medskip

{\em Proof of Theorems~\ref{thm:assoSche7class}-\ref{thm:assoSche9class}.}
From the statements at the beginning of this section,
Theorem~\ref{thm:assoSche9class} is clear. Theorem~\ref{thm:assoSche7class}
follows directly from the Bannai-Muzychuk criterion and the eigenmatrices
of association scheme $\mathfrak{B}$. The scheme $\mathfrak{A}$ is a fusion
scheme of $\mathfrak{B}$.
\qed

\begin{corollary}\label{coro:assoSche5and4class}
  There exists a 5-class association scheme $\mathcal{C}$ on the quotient
  group $G/\langle (2,0,0)\rangle$. Furthermore, there exists a
  4-class fusion scheme $\mathcal{D}$ of the scheme $\mathcal{C}$.
  Their first and second eigenmatrices are given in Appendix A.
\end{corollary}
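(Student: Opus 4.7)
The plan is to produce $\mathcal{C}$ as the quotient of an intermediate fusion of $\mathfrak{B}'$ by the order-$2$ subgroup $H=\langle(2,0,0)\rangle=\{(0,0,0),(2,0,0)\}$, and $\mathcal{D}$ as a further coarsening of $\mathcal{C}$. The key observation is that $H$ is itself a union of classes of $\mathfrak{B}'$, namely $H=\R_0\cup\R_9$, and that the involution $g\mapsto g+(2,0,0)$ on $G$ negates the exponential sum, because
$$S(u+2,a,b)=\sum_{X\in\Te}i^{u+2+\T(aX+2bX^3)}+\sum_{X\in\Te}i^{-u-2-\T(aX+2bX^3)}=-S(u,a,b),$$
while preserving the flag $b=0$. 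Hence it induces on the index set $\{0,1,\ldots,9\}$ the involution $\sigma$ with orbits $\{0,9\}$, $\{1,8\}$, $\{2,7\}$, $\{3,6\}$, $\{4\}$, $\{5\}$.

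First I would verify that the partition $\Lambda=(\{0,9\},\{1,8\},\{2,7\},\{3,6\},\{4\},\{5\})$ defines a $5$-class fusion $\mathcal{C}'$ of $\mathfrak{B}'$ on $G$, via the Bannai--Muzychuk criterion applied to the eigenmatrix $P$ of Theorem~\ref{main}. A natural candidate for the compatible dual partition is suggested by the dual $\sigma$-symmetry: inspection of the defining sets shows that $\E_0,\E_1,\E_4,\E_5,\E_8,\E_9$ are contained in $H^\perp=\{0,2\}\times R\times\Fq$ while $\E_2,\E_3,\E_6,\E_7$ lie in its complement, and a block-by-block check of $P$ in Appendix~A confirms the required constant-row-sum condition. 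Since $\Lambda_0=\{0,9\}$ corresponds exactly to $H$ and each other $\Lambda_j$-fused class is a union of cosets of $H$ (as $H$-translation swaps $\R_i$ with $\R_{\sigma(i)}$), the scheme $\mathcal{C}'$ descends to the quotient $G/H$, producing the desired $5$-class scheme $\mathcal{C}$; its eigenmatrices are obtained by collapsing the corresponding rows and columns of $P$ and $Q$.

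For the $4$-class fusion $\mathcal{D}$ of $\mathcal{C}$, I would apply Bannai--Muzychuk once more, now to the eigenmatrix of $\mathcal{C}$. Guided by the coarser Lee-weight grouping underlying the $7$-class scheme $\mathfrak{A}$ of Theorem~\ref{thm:assoSche7class}, the natural candidate is to merge the two quotient classes at Lee weight $q\pm\sqrt{q/2}$---that is, to fuse the images in $G/H$ of $\R_2\cup\R_7$ and $\R_3\cup\R_6$---while retaining the Kerdock versus non-Kerdock distinction at Lee weight $q$ (the classes $\R_4$ and $\R_5$). The main obstacle throughout is this verification of the constant-row-sum conditions: conceptually routine but arithmetically involved, it relies on the explicit form of $P$ from Appendix~A and ultimately on the completed character table $\mathfrak{T}$ from the proof of Theorem~\ref{main}.
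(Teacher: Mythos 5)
Your proposal is correct and follows essentially the same route as the paper, whose proof is just the one-line observation that the Bannai--Muzychuk criterion applied to the eigenmatrices of $\mathfrak{B}'$ yields both schemes; you additionally supply the correct partitions and the structural reason they work (the involution $g\mapsto g+(2,0,0)$ negates $S(u,a,b)$, pairs $\R_0\leftrightarrow\R_9$, $\R_1\leftrightarrow\R_8$, $\R_2\leftrightarrow\R_7$, $\R_3\leftrightarrow\R_6$, fixes $\R_4,\R_5$, and makes every fused class a union of cosets of $\langle(2,0,0)\rangle$). The only slight imprecision is calling the intermediate partition with $\Lambda_0=\{0,9\}$ a fusion \emph{scheme} on $G$ itself --- the class containing the identity relation must be $\{0\}$ --- but this is harmless since you immediately pass to the quotient $G/\langle(2,0,0)\rangle$, where that class collapses to the identity, exactly as intended.
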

\begin{proof}
  This can be readily checked using the eigenmatrices of the association scheme
  $\mathcal{B'}$ with the help of Bannai-Muzychuk criterion.
\end{proof}\qed

\section{Completion of the character table $\mathfrak{T}$}
\label{sec:completion}
\subsection{Columns indexed by $\N_0,\N_1,\N_2$ and $\N_3$}
The first four columns of the character table $\mathfrak{T}$ can be
obtained through direct computations. Here we only provide the proof of
the column indexed by $\N_2$, each of the other three cases is exactly the same.

For $g=(v,c,d)\in G$, we have
\begin{eqnarray*}
g(\N_2)
      &=&\sum_{(u,a,b)\in G}S(u,a,b)i^{uv+\T(ac+2bd)}\\
      &=&\sum_{(u,a,b)\in G}\sum_{X\in\Te}i^{u(v+1)+\T(a(c+X))+2\T(b(d+X^3))}+\\
      &&\sum_{(u,a,b)\in G}\sum_{X\in\Te}i^{u(v-1)+\T(a(c-X))+2\T(b(d-X^3))}\\
      &=& \sum_{X\in\Te}\sum_{u\in\Z_4}i^{u(v+1)}\sum_{a\in R}i^{\T(a(c+X))}\sum_{b\in\Te}i^{2\T(b(d+X^3))}+\\
      &&  \sum_{X\in\Te}\sum_{u\in\Z_4}i^{u(v-1)}\sum_{a\in R}i^{\T(a(c-X))}\sum_{b\in\Te}i^{2\T(b(d-X^3))}\\\\
      &=&\left\{ \begin{array}{ll}
                  4\,q^3, &\textup{ if } g\in \E_2;\\
                  0,   &\textup{ otherwise},
                \end{array}
        \right.
\end{eqnarray*}
and this completes our proof.

\subsection{Columns indexed by $\N_4,\N_5,\N_6,\N_7$ and $\N_9$}
First, a direct calculation gives that
\begin{eqnarray*}
g(\N_4)&=&\sum_{(u,a,b)\in G}S(u,a,b)^2i^{uv+\T(ac+2bd)}\\
&=&\sumu i^{u(v+2)}\ssTe\suma i^{\T(a(c+X+Y))}\sumb i^{\T(2b(d+X^3+Y^3))}+\\
&&\sumu i^{u(v-2)}\ssTe\suma i^{\T(a(c-X-Y))}\sumb i^{\T(2b(d-X^3-Y^3))}+\\
&&2\sumu i^{uv}\ssTe\suma i^{\T(a(c+X-Y))}\sumb i^{\T(2b(d+X^3-Y^3))}
\end{eqnarray*}
for $g=(v,c,d)\in G.$

Suppose that $v=0$ and $c=-Z+W$ for some $Z,W\in\Te$ with $Z\neq W$. Then
only the last term in the above sum possibly contributes. Using $(b)$ of
Lemma~\ref{squareCount},
we see that it will contribute $8q^3$ if $d=z^3+w^3$ and zero otherwise.
A routine analysis shows that
\begin{eqnarray*}
g(\N_4)&=&\left\{
  \begin{array}{ll}
    8\,q^4, &\uif g\in\E_0;\\
    8\,q^3, &\uif g\in\E_4;\\
    0,     &\textup{ otherwise},
  \end{array}
  \right.
\end{eqnarray*}
and this completes the column indexed by $\N_4.$

The column indexed by $\N_5$ can be checked exactly as same as $\N_4$.

Now we continue to treat the column $\N_6$:
\begin{eqnarray*}
g(\N_6)&=&\sum_{(u,a,b)\in G}S(u,a,b)^3i^{uv+\T(ac+2bd)}\\
&=&\sumu i^{u(v+3))}\sssTe\suma i^{\T(a(c+X+Y+Z))}\sumb i^{\T(2b(d+X^3+Y^3+Z^3))}+\\
&&\sumu i^{u(v-3))}\sssTe\suma i^{\T(a(c-X-Y-Z))}\sumb i^{\T(2b(d-X^3-Y^3-Z^3))}+\\
&&3\sumu i^{u(v+1)}\sssTe\suma i^{\T(a(c+X+Y-Z))}\sumb i^{\T(2b(d+X^3+Y^3-Z^3))}+\\
&&3\sumu i^{u(v-1)}\sssTe\suma i^{\T(a(c-X-Y+Z))}\sumb i^{\T(2b(d-X^3-Y^3+Z^3))}.\\\\
\end{eqnarray*}
First, suppose that $g=(1,W,w^3)\in\E_2$ for some $W\in\Te.$
Then only the first and last terms in the above sum will contribute.
Using $(d)$ of Lemma~\ref{squareCount},
we see that the first term will contribute $4\,q^3$ if $d=w^3$
and zero otherwise. Similar analysis shows that the last term will
contribute $12\,q^3(2q-1)$ if $d=w^3$ and zero otherwise by $(c)$
of Lemma~\ref{squareCount}.
The analysis for $g=(3,-W,w^3)\in\E_2$ is exactly the same.
So $g(\N_6)=8q^3(3q-1)$ for $g\in\E_2$.
Secondly, when $g\in\E_6,$ the argument can be proved similarly
using Lemmas~\ref{lemma:N_6_1}-\ref{lemma:N_6_2} in Appendix B.

The column indexed by $\N_7$ and $\N_9$ are readily completed by using
Lemma~\ref{cubicCount} and Corollary~\ref{coro: quaCount} respectively.

\subsection{Column indexed by $\N_8$}
This is the most difficult case.
When $g=(0,0,0)$,
the identity $g(\N_8)=16q^4(3q-1)$ is direct. When $g\in\E_8,$
we can prove $g(\N_8)=48q^4$ by
Corollaries~\ref{coro: N_8_1}-\ref{coro: N_8_2}.
The exponential sum $\xi(a,b)=\sx i^{\T(aX+2bX^3)}$ is closely
related to $S(u,a,b)$.
We further introduce two exponential sums
$$\mathbf{E}(c,d):=\suma\sumb\left(\xi^4(a,b)+\overline{\xi^4(a,b)}
    +6\,\xi^2(a,b)\overline{\xi^2(a,b)}\right)i^{\T(ac+2bd)}$$
and
$$\mathbf{F}(c,d):=\suma\sumb\left(\xi^3(a,b)\overline{\xi^(a,b)}
+\xi(a,b)\overline{\xi^3(a,b)}\right)i^{uv+\T(ac+2bd)}.$$
Then
\begin{eqnarray*}
g(\N_8)=\sum_{(u,a,b)\in G}S(u,a,b)^4i^{uv+\T(ac+2bd)}
=\sumu i^{uv}\mathbf{E}(c,d)+\,4\sumu i^{u(v+2)}\mathbf{F}(c,d).
\end{eqnarray*}
So it is enough to determine the distribution of
$\mathbf{E}(c,d)$
and
$\mathbf{F}(c,d)$.
Since the determination is very technical and complex, we prefer to leave
it in Appendix B.
Using Lemmas~\ref{lemma:N_8_1}-\ref{lemma:N_8_3}, it is now a routine
check to see that
\begin{eqnarray*}
g( \N_8)&=&\left\{
  \begin{array}{ll}
    2^{3m+6}(3\cdot2^{m-1}-1),       &\uif g\in\E_4;\\
    2^{3m+6}(2^{m-1}-1),       &\uif g\in\E_5.
    \end{array}
  \right.
\end{eqnarray*}

\section{Conclusion}
In this paper, we construct a $9$-class scheme from a refinement of the
partition of the $\DG$ code by its Lee weights. We get the explicit
expressions of the dual scheme of this $9$-class scheme by manipulations
of complicated exponential sums and heavy computations. There is an interesting ``non-symmetry" between the characterization of the $9$-class scheme and its dual in the sense that the description of the original scheme reflects the properties of the underlying code while we see nothing about the code in the description of the dual scheme. Moreover, the dual scheme can not be described by the cosets of the Goethals codes as far as we see it. It will be interesting to see what code properties are reflected in the dual scheme.

In \cite{BD} Bonnecaze and Duursma showed that the partition of the
Kerdock (resp. shortened Kerdock) code induced by the complete weight
enumerators gives rise to an association scheme. Using this scheme and
the complete weight enumerator of the Kerdock (resp. shortened Kerdock)
code, they also showed that the complete weight enumerator of each coset
of the dual code, namely Preparata (resp. punctured Preparata) code can
be explicitly determined. Thus it is reasonable to believe that this is
also true for the $\DG$ code, which would give rise to a $22$-class
association scheme. Since the complete weight enumerator of the $\DG$
code has been explicitly determined by Shin, Kumar and Helleseth
(\cite{SKH}), once we figure out the parameters of this conjectured
scheme, then theoretically we know all about the complete weight
enumerators of each coset of the Goethals code. The coset weight
enumerators of the Goethals code have been studied by Helleseth and
Zinoviev (see~\cite{HZ1,HZ2}). We mention that our $9$-class scheme is
not a fusion scheme of this conjectured $22$-class scheme.

Davis and Xiang (see \cite{DX}) constructed the first known examples
where the non-homomorphic bijection approach outlined by Hagita and
Schmidt (see \cite{HS}) can produce negative Latin square type partial
difference sets in groups that previously had no known constructions.
The Cayley graphs of partial difference sets are strongly regular, so
yield  two-class association schemes.
Therefore it is interesting to investigate whether there are translation
schemes over elementary abelian $2$-groups with the same parameters as
those constructed in~\cite{ABS,BD} and this paper from the various $\Z_4$-linear codes.
When $m=3$, the Gray map image of the $\DG$ code is a $\Z_2$-linear code,
and we checked that the Gray map image of the $9$-class scheme remains a
scheme but with different parameters. However, when $m>3$, the binary $\DG$
code is no longer linear, and no scheme arises naturally in this way.

\section{Appendix A}
\renewcommand{\thesection}{A}
The first and second eigenmatrices of association scheme $\mathfrak{A}$:
\begin{scriptsize}
\renewcommand{\kbldelim}{(}
\renewcommand{\kbrdelim}{.}
$$
P=\kbordermatrix{
\mbox{}&\R_0&\R_1&\R_2\cup\R_3&\R_4\\
\E_0& 1&1/24\,{s}^{6}-1/8\,{s}^{4}+1/12\,{s
}^{2}&1/2\,{s}^{4}-4/3\,{s}^{2}+1/12\,{s}^{6}&-2+2\,{s}^{2}\\ 
\E_1\cup\E_9 &1&-1/12
\,{s}^{4}+1/12\,{s}^{2}&1/3\,{s}^{4}-4/3\,{s}^{2}&-2+2\,{s}^{2}\\
\E_2 &1&1/12\,{s}^{5}-1/4\,{s}^{3}+1/6\,s&1/2\,{
s}^{3}-4/3\,s+1/12\,{s}^{5}&0\\ 
\E_3\cup\E_7 &1&-1/6\,{s}^{3}
+1/6\,s&1/3\,{s}^{3}-4/3\,s&0\\ 
\E_4 &1&1/8\,{s}^{4}-1/4\,{s}^{2}&0&-2\\
\E_5 &1&-
1/4\,{s}^{2}&0&-2\\ 
\E_6 &1
&1/12\,{s}^{3}+1/6\,s&-4/3\,s-1/6\,{s}^{3}&0\\
\E_8&1&1/24\,{s}^{4}+1/12\,{s}^{2}&-4/3\,{s}^{2}-1/6\,
{s}^{4}&-2+2\,{s}^{2}\\
}
$$
\renewcommand{\kbldelim}{.}
\renewcommand{\kbrdelim}{)}
$$
\kbordermatrix{
\mbox{}&\R_5&\R_6\cup\R_7&\R_8&\R_9\\
&1/4\,{s}^{
6}-3/4\,{s}^{4}+1/2\,{s}^{2}&1/2\,{s}^{4}-4/3\,{s}^{2}+1/12\,{s}^{6}&1
/24\,{s}^{6}-1/8\,{s}^{4}+1/12\,{s}^{2}&1\\
&-1/2\,
{s}^{4}+1/2\,{s}^{2}&1/3\,{s}^{4}-4/3\,{s}^{2}&-1/12\,{s}^{4}+1/12\,{s
}^{2}&1\\
&0&-1/12\,{s}^{5}+4/3\,s-1/2\,{s}^{3}&-1/
12\,{s}^{5}+1/4\,{s}^{3}-1/6\,s&-1\\
&0&-1/3\,{s}^{3}+4/3\,s&1/6\,{s}^{3}-1/6
\,s&-1\\
&-1/4\,{s}^
{4}+1/2\,{s}^{2}&0&1/8\,{s}^{4}-1/4\,{s}^{2}&1\\
&1/2\,{s}^{2}&0&-1/4\,{s}^{2}&1\\
&0&1/6\,{s}^{3}+4/3\,s&-1/
12\,{s}^{3}-1/6\,s&-1\\
&1/4\,{s}^{4}+1/2\,{s}^{2}&-4/3\,{s}^{2}-1/6\,{s}
^{4}&1/24\,{s}^{4}+1/12\,{s}^{2}&1\\
},
$$
$$
Q= \left( \begin {array}{ccccc} 1&1/2\,{s}^{2}-1&{s}^{2}&1/2\, \left( {s
}^{2}-2 \right) {s}^{2}&1/2\,{s}^{2} \left( {s}^{2}-1 \right)
\\ \noalign{\medskip}1&-1&2\,s&-2\,s&3/2\,{s}^{2}\\ \noalign{\medskip}
1&1/2\,{s}^{2}-1&s&1/2\, \left( {s}^{2}-2 \right) s&0
\\ \noalign{\medskip}1&-1&s&-s&0\\ \noalign{\medskip}1&1/2\,{s}^{2}-1&0
&0&-1/2\,{s}^{2}\\ \noalign{\medskip}1&-1&0&0&-1/2\,{s}^{2}
\\ \noalign{\medskip}1&-1&-s&s&0\\ \noalign{\medskip}1&1/2\,{s}^{2}-1&
-s&-1/2\, \left( {s}^{2}-2 \right) s&0\end {array} \right.
$$
$$
\left. \begin {array}{ccc} 1/4\,{s}^{2} \left( {s}^{4}-3\,{s}^{2}+2
 \right) &1/6\,{s}^{2} \left( {s}^{4}-3\,{s}^{2}+2 \right) &1/6\,{s}^{
4}-1/2\,{s}^{2}+1/3\\ \noalign{\medskip}-3/2\,{s}^{2}&1/3\,s \left( {s
}^{2}+2 \right) &1/6\,{s}^{2}+1/3\\ \noalign{\medskip}0&-1/3\,s
 \left( {s}^{2}-1 \right) &1/3-1/3\,{s}^{2}\\ \noalign{\medskip}-1/4\,
 \left( {s}^{2}-2 \right) {s}^{2}&0&1/6\,{s}^{4}-1/2\,{s}^{2}+1/3
\\ \noalign{\medskip}1/2\,{s}^{2}&0&1/6\,{s}^{2}+1/3
\\ \noalign{\medskip}0&1/3\,s \left( {s}^{2}-1 \right) &1/3-1/3\,{s}^{
2}\\ \noalign{\medskip}-3/2\,{s}^{2}&-1/3\,s \left( {s}^{2}+2 \right)
&1/6\,{s}^{2}+1/3\\ \noalign{\medskip}1/4\,{s}^{2} \left( {s}^{4}-3\,{
s}^{2}+2 \right) &-1/6\,{s}^{2} \left( {s}^{4}-3\,{s}^{2}+2 \right) &1
/6\,{s}^{4}-1/2\,{s}^{2}+1/3\end {array} \right).
$$
\end{scriptsize}

The first and the second eigenmatrices of association
schemes $\mathfrak{B}$ and $\mathfrak{B}'$:
\renewcommand{\kbldelim}{(}
\renewcommand{\kbrdelim}{.}
\begin{scriptsize}
$$
P=\kbordermatrix{\mbox{}&\R_0&\R_1&\R_2&\R_3&\R_4\\
\E_0&1&1/24\,{s}^{6}-1/8\,{s}^{4}+1/12\,{s}^{
2}&1/2\,{s}^{4}-{s}^{2}&1/12\,{s}^{6}-1/3\,{s}^{2}&-2+2\,{s}^{2}
\\
\E_1& 1&-1/12\,{s}^{4}+1/12\,{s}^{2}&1/2\,{s}^{4}-{s}^{
2}&-1/6\,{s}^{4}-1/3\,{s}^{2}&-2+2\,{s}^{2}\\
\E_2& 1&1/12
\,{s}^{5}-1/4\,{s}^{3}+1/6\,s&1/2\,{s}^{3}-s&1/12\,{s}^{5}-1/3\,s&0
\\
\E_3& 1&-1/6\,{s}^{3}+1/6\,s&1/2\,{s}^{3}-s&-1/6\,{s}^{
3}-1/3\,s&0\\
\E_4& 1&1/8\,{s}^{4}-1/4\,{s}^{2}&0&0&-2
\\
\E_5& 1&-1/4\,{s}^{2}&0&0&-2\\
\E_6& 1&1/12
\,{s}^{3}+1/6\,s&-s&-1/6\,{s}^{3}-1/3\,s&0\\
\E_7& 1&-1/6
\,{s}^{3}+1/6\,s&-s&-1/3\,s+1/3\,{s}^{3}&0\\
\E_8& 1&1/24
\,{s}^{4}+1/12\,{s}^{2}&-{s}^{2}&-1/6\,{s}^{4}-1/3\,{s}^{2}&-2+2\,{s}^
{2}\\
\E_9& 1&-1/12\,{s}^{4}+1/12\,{s}^{2}&-{s}^{2}&-1/3\,
{s}^{2}+1/3\,{s}^{4}&-2+2\,{s}^{2}
}$$
\renewcommand{\kbldelim}{.}
\renewcommand{\kbrdelim}{)}
$$
\kbordermatrix{ \mbox{}&\R_5&\R_6&\R_7&\R_8&\R_9\\
&1/4\,{s}^{6}-3/4\,{s}^{4}+1/2\,{s}^{2}&1
/12\,{s}^{6}-1/3\,{s}^{2}&1/2\,{s}^{4}-{s}^{2}&1/24\,{s}^{6}-1/8\,{s}^
{4}+1/12\,{s}^{2}&1\\
&-1/2\,{s}^{4}+1/2\,{s}^{2}&-1/
6\,{s}^{4}-1/3\,{s}^{2}&1/2\,{s}^{4}-{s}^{2}&-1/12\,{s}^{4}+1/12\,{s}^
{2}&1\\
&0&-1/12\,{s}^{5}+1/3\,s&-1/2\,{s}^{3}+s&-1/
12\,{s}^{5}+1/4\,{s}^{3}-1/6\,s&-1\\ &0&1/6\,{s}^{3}+
1/3\,s&-1/2\,{s}^{3}+s&1/6\,{s}^{3}-1/6\,s&-1\\ &-1/4
\,{s}^{4}+1/2\,{s}^{2}&0&0&1/8\,{s}^{4}-1/4\,{s}^{2}&1
\\ &1/2\,{s}^{2}&0&0&-1/4\,{s}^{2}&1
\\ &0&1/6\,{s}^{3}+1/3\,s&s&-1/12\,{s}^{3}-1/6\,s&-1
\\ &0&1/3\,s-1/3\,{s}^{3}&s&1/6\,{s}^{3}-1/6\,s&-1
\\ &1/4\,{s}^{4}+1/2\,{s}^{2}&-1/6\,{s}^{4}-1/3\,{s}^
{2}&-{s}^{2}&1/24\,{s}^{4}+1/12\,{s}^{2}&1\\ &-1/2\,{
s}^{4}+1/2\,{s}^{2}&-1/3\,{s}^{2}+1/3\,{s}^{4}&-{s}^{2}&-1/12\,{s}^{4}
+1/12\,{s}^{2}&1
},$$

$$
Q=\left( \begin {array}{cccccc} 1&1/2\,{s}^{2}-1&{s}^{2}&1/2\, \left( {
s}^{2}-2 \right) {s}^{2}&1/2\,{s}^{2} \left( {s}^{2}-1 \right) &1/4\,{
s}^{2} \left( {s}^{4}-3\,{s}^{2}+2 \right) \\ \noalign{\medskip}1&-1&2
\,s&-2\,s&3/2\,{s}^{2}&-3/2\,{s}^{2}\\ \noalign{\medskip}1&1/2\,{s}^{2
}-1&s&1/2\, \left( {s}^{2}-2 \right) s&0&0\\ \noalign{\medskip}1&-1&s&
-s&0&0\\ \noalign{\medskip}1&1/2\,{s}^{2}-1&0&0&-1/2\,{s}^{2}&-1/4\,
 \left( {s}^{2}-2 \right) {s}^{2}\\ \noalign{\medskip}1&-1&0&0&-1/2\,{
s}^{2}&1/2\,{s}^{2}\\ \noalign{\medskip}1&-1&-s&s&0&0
\\ \noalign{\medskip}1&1/2\,{s}^{2}-1&-s&-1/2\, \left( {s}^{2}-2
 \right) s&0&0\\ \noalign{\medskip}1&-1&-2\,s&2\,s&3/2\,{s}^{2}&-3/2\,
{s}^{2}\\ \noalign{\medskip}1&1/2\,{s}^{2}-1&-{s}^{2}&-1/2\, \left( {s
}^{2}-2 \right) {s}^{2}&1/2\,{s}^{2} \left( {s}^{2}-1 \right) &1/4\,{s
}^{2} \left( {s}^{4}-3\,{s}^{2}+2 \right) \end {array} \right.
$$

$$
\left.\begin {array}{cccc} 1/6\,{s}^{2} \left( {s}^{4}-3\,{s}^{2}+2
 \right) &1/12\,{s}^{2} \left( {s}^{4}-4 \right) &1/6\,{s}^{4}-1/2\,{s
}^{2}+1/3&1/12\,{s}^{4}-1/3\\ \noalign{\medskip}1/3\,s \left( {s}^{2}+
2 \right) &-1/3\,s \left( {s}^{2}+2 \right) &1/6\,{s}^{2}+1/3&-1/6\,{s
}^{2}-1/3\\ \noalign{\medskip}-1/3\,s \left( {s}^{2}-1 \right) &-1/6\,
s \left( {s}^{2}+2 \right) &1/3-1/3\,{s}^{2}&-1/6\,{s}^{2}-1/3
\\ \noalign{\medskip}-1/3\,s \left( {s}^{2}-1 \right) &1/3\,s \left( {
s}^{2}-1 \right) &1/3-1/3\,{s}^{2}&-1/3+1/3\,{s}^{2}
\\ \noalign{\medskip}0&0&1/6\,{s}^{4}-1/2\,{s}^{2}+1/3&1/12\,{s}^{4}-1
/3\\ \noalign{\medskip}0&0&1/6\,{s}^{2}+1/3&-1/6\,{s}^{2}-1/3
\\ \noalign{\medskip}1/3\,s \left( {s}^{2}-1 \right) &-1/3\,s \left( {
s}^{2}-1 \right) &1/3-1/3\,{s}^{2}&-1/3+1/3\,{s}^{2}
\\ \noalign{\medskip}1/3\,s \left( {s}^{2}-1 \right) &1/6\,s \left( {s
}^{2}+2 \right) &1/3-1/3\,{s}^{2}&-1/6\,{s}^{2}-1/3
\\ \noalign{\medskip}-1/3\,s \left( {s}^{2}+2 \right) &1/3\,s \left( {
s}^{2}+2 \right) &1/6\,{s}^{2}+1/3&-1/6\,{s}^{2}-1/3
\\ \noalign{\medskip}-1/6\,{s}^{2} \left( {s}^{4}-3\,{s}^{2}+2
 \right) &-1/12\,{s}^{2} \left( {s}^{4}-4 \right) &1/6\,{s}^{4}-1/2\,{
s}^{2}+1/3&1/12\,{s}^{4}-1/3\end {array} \right).
$$
\end{scriptsize}

The first and the second eigenmatrices of association scheme
$\mathcal{C}$:
\begin{tiny}
$$
P= \left( \begin {array}{cccccc} 1&1/24\,{s}^{6}-1/8\,{s}^{4}+1/12\,{s}^
{2}&1/2\,{s}^{4}-{s}^{2}&1/12\,{s}^{6}-1/3\,{s}^{2}&{s}^{2}-1&1/8\,{s}
^{6}-3/8\,{s}^{4}+1/4\,{s}^{2}\\ \noalign{\medskip}1&-1/12\,{s}^{4}+1/
12\,{s}^{2}&1/2\,{s}^{4}-{s}^{2}&-1/6\,{s}^{4}-1/3\,{s}^{2}&{s}^{2}-1&
-1/4\,{s}^{4}+1/4\,{s}^{2}\\ \noalign{\medskip}1&1/8\,{s}^{4}-1/4\,{s}
^{2}&0&0&-1&-1/8\,{s}^{4}+1/4\,{s}^{2}\\ \noalign{\medskip}1&-1/4\,{s}
^{2}&0&0&-1&1/4\,{s}^{2}\\ \noalign{\medskip}1&1/24\,{s}^{4}+1/12\,{s}
^{2}&-{s}^{2}&-1/6\,{s}^{4}-1/3\,{s}^{2}&{s}^{2}-1&1/8\,{s}^{4}+1/4\,{
s}^{2}\\ \noalign{\medskip}1&-1/12\,{s}^{4}+1/12\,{s}^{2}&-{s}^{2}&-1/
3\,{s}^{2}+1/3\,{s}^{4}&{s}^{2}-1&-1/4\,{s}^{4}+1/4\,{s}^{2}
\end {array} \right),
$$
$$
Q= \left( \begin {array}{cccccc} 1&1/2\,{s}^{2}-1&1/2\,{s}^{2} \left( {s
}^{2}-1 \right) &1/4\,{s}^{2} \left( {s}^{4}-3\,{s}^{2}+2 \right) &1/6
\,{s}^{4}-1/2\,{s}^{2}+1/3&1/12\,{s}^{4}-1/3\\ \noalign{\medskip}1&-1&
3/2\,{s}^{2}&-3/2\,{s}^{2}&1/6\,{s}^{2}+1/3&-1/6\,{s}^{2}-1/3
\\ \noalign{\medskip}1&1/2\,{s}^{2}-1&0&0&1/3-1/3\,{s}^{2}&-1/6\,{s}^{
2}-1/3\\ \noalign{\medskip}1&-1&0&0&1/3-1/3\,{s}^{2}&-1/3+1/3\,{s}^{2}
\\ \noalign{\medskip}1&1/2\,{s}^{2}-1&-1/2\,{s}^{2}&-1/4\, \left( {s}^
{2}-2 \right) {s}^{2}&1/6\,{s}^{4}-1/2\,{s}^{2}+1/3&1/12\,{s}^{4}-1/3
\\ \noalign{\medskip}1&-1&-1/2\,{s}^{2}&1/2\,{s}^{2}&1/6\,{s}^{2}+1/3&
-1/6\,{s}^{2}-1/3\end {array} \right).
$$
\end{tiny}

The first and the second eigenmatrices of association scheme
$\mathcal{D}$:
\begin{tiny}
$$
P= \left( \begin {array}{ccccc} 1&1/24\,{s}^{6}-1/8\,{s}^{4}+1/12\,{s}^{
2}&1/2\,{s}^{4}-4/3\,{s}^{2}+1/12\,{s}^{6}&{s}^{2}-1&1/8\,{s}^{6}-3/8
\,{s}^{4}+1/4\,{s}^{2}\\ \noalign{\medskip}1&-1/12\,{s}^{4}+1/12\,{s}^
{2}&1/3\,{s}^{4}-4/3\,{s}^{2}&{s}^{2}-1&-1/4\,{s}^{4}+1/4\,{s}^{2}
\\ \noalign{\medskip}1&1/8\,{s}^{4}-1/4\,{s}^{2}&0&-1&-1/8\,{s}^{4}+1/
4\,{s}^{2}\\ \noalign{\medskip}1&-1/4\,{s}^{2}&0&-1&1/4\,{s}^{2}
\\ \noalign{\medskip}1&1/24\,{s}^{4}+1/12\,{s}^{2}&-4/3\,{s}^{2}-1/6\,
{s}^{4}&{s}^{2}-1&1/8\,{s}^{4}+1/4\,{s}^{2}\end {array} \right),
$$
$$
Q=\left( \begin {array}{ccccc} 1&1/2\,{s}^{2}-4/3+1/12\,{s}^{4}&1/2\,{s
}^{2} \left( {s}^{2}-1 \right) &1/4\,{s}^{2} \left( {s}^{4}-3\,{s}^{2}
+2 \right) &1/6\,{s}^{4}-1/2\,{s}^{2}+1/3\\ \noalign{\medskip}1&-4/3-1
/6\,{s}^{2}&3/2\,{s}^{2}&-3/2\,{s}^{2}&1/6\,{s}^{2}+1/3
\\ \noalign{\medskip}1&1/3\,{s}^{2}-4/3&0&0&1/3-1/3\,{s}^{2}
\\ \noalign{\medskip}1&1/2\,{s}^{2}-4/3+1/12\,{s}^{4}&-1/2\,{s}^{2}&-1
/4\, \left( {s}^{2}-2 \right) {s}^{2}&1/6\,{s}^{4}-1/2\,{s}^{2}+1/3
\\ \noalign{\medskip}1&-4/3-1/6\,{s}^{2}&-1/2\,{s}^{2}&1/2\,{s}^{2}&1/
6\,{s}^{2}+1/3\end {array} \right).
$$
\end{tiny}

\renewcommand{\thesection}{7}
\section{Appendix B}
\renewcommand{\thesection}{B}
\begin{lemma}\cite[Lemma 2]{HK}
  Let $\bfe=(e_X)_{X\in\Te}$ and let $E_j=\{X\mid e_X=j\}$ for $j=0,1,2,3.$
  The equation given by
  $$ \sum_{X\in\Te}e_XX=A+2B,\quad A,B\in\Te, e_X\in\Zfour$$
  is equivalent to the two binary equations
  $$ a=\sum_{X\in E_1\cup E_3}x$$ and
  $$ b^2=\sum_{X\in E_2\cup E_3}x^2+\sum_{X,Y\in E_1\cup E_3\atop{X<Y}}xy.$$
  \label{equivalency}
\end{lemma}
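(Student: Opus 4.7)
The plan is to work inside the Galois ring $R=GR(4,m)$, using that every element has a unique $2$-adic expansion $z=A+2B$ with $A,B\in\Te$. First I would split each coefficient according to its binary digits: write $e_X=\epsilon_X+2\delta_X$ with $\epsilon_X,\delta_X\in\{0,1\}$, so that $X\in E_1\cup E_3$ iff $\epsilon_X=1$ and $X\in E_2\cup E_3$ iff $\delta_X=1$. This gives the decomposition
\[
\sum_{X\in\Te} e_X X = S_1 + 2 S_2, \qquad S_1=\sum_{X\in E_1\cup E_3} X, \qquad S_2=\sum_{X\in E_2\cup E_3} X.
\]
Writing $S_1=A+2B'$ for its unique Teichm\"uller expansion, the sum becomes $A+2(B'+S_2)$; comparing with $A+2B$ forces $A$ to be the Teichm\"uller representative of $\mu(S_1)$ and forces $2B=2(B'+S_2)$ in $R$. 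Reducing $A$ modulo $2$ immediately gives the first binary equation $a=\mu(S_1)=\sum_{X\in E_1\cup E_3} x$.

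For the second equation, I would extract $b'=\mu(B')$ via the Frobenius $\sigma$. On the one hand, $\sigma(S_1)=\sum_{X\in E_1\cup E_3} X^2$, since $\sigma(X)=X^2$ for $X\in\Te$; on the other, $\sigma(A+2B')=A^2+2(B')^2$. Squaring $S_1=A+2B'$ in $R$ (where $4=0$) yields $A^2=S_1^2=\sum_{X\in E_1\cup E_3} X^2+2\sum_{X<Y\in E_1\cup E_3} XY$. Subtracting gives
\[
2(B')^2 \;=\; \sigma(S_1)-A^2 \;=\; 2\sum_{X<Y\in E_1\cup E_3} XY,
\]
and passing through the additive isomorphism $2R\to\Fq$, $2z\mapsto\mu(z)$, produces $(b')^2=\sum_{X<Y\in E_1\cup E_3} xy$.

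Finally, applying that same isomorphism to $2B=2(B'+S_2)$ gives $b=b'+\mu(S_2)=b'+\sum_{X\in E_2\cup E_3} x$. Squaring in characteristic $2$ (so that Frobenius is additive) yields
\[
b^2 \;=\; (b')^2 + \sum_{X\in E_2\cup E_3} x^2 \;=\; \sum_{X\in E_2\cup E_3} x^2 + \sum_{X<Y\in E_1\cup E_3} xy,
\]
which is the second binary equation. The main subtlety is handling carries when summing elements of $\Te$, since such sums generally leave $\Te$; the Frobenius-plus-squaring trick neatly bypasses any explicit combinatorial carry analysis by extracting the squared second digit $(B')^2$ directly. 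Once that identity is in hand, the remainder is bookkeeping with the $\mu$-projection.
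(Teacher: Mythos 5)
The paper does not prove this lemma at all: it is quoted verbatim from \cite[Lemma 2]{HK}, so there is no internal proof to compare against. Your argument is correct and self-contained. The digit splitting $e_X=\epsilon_X+2\delta_X$ correctly reduces everything to the single sum $S_1=\sum_{X\in E_1\cup E_3}X$, the identification $a=\mu(S_1)$ is immediate, and the key computation is sound: from $\sigma(S_1)=\sum X^2=(A')^2+2(B')^2$ together with $(A')^2=S_1^2=\sum X^2+2\sum_{X<Y}XY$ (using $4=0$ in $R$ and $-2=2$) you get $2(B')^2=2\sum_{X<Y}XY$, hence $(b')^2=\sum_{X<Y}xy$ under the additive isomorphism $2R\cong\Fq$; adding $\mu(S_2)$ and squaring in characteristic $2$ gives the stated formula for $b^2$. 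The equivalence (rather than just one implication) follows since $\mu|_{\Te}$ and the squaring map on $\Fq$ are bijections, so the two binary equations determine $A$ and $B$ uniquely --- you could state this half-sentence explicitly. Your route differs from the classical one used in the source and in the HKCSS line of papers, which handles the carries combinatorially by iterating the identity $X+Y=(\sqrt{X}+\sqrt{Y})^2+2\sqrt{XY}$ (the $\oplus$ operation appearing in Appendix C of this paper) and inducting on the number of summands; your Frobenius-plus-squaring trick extracts the second digit $(B')^2$ in a single stroke and is arguably cleaner, at the cost of being less explicit about where the cross terms $xy$ come from as accumulated carries.
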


We set $2R=\{2x\mid x\in R\}$ and $-\Te=\{-X\mid X\in\Te\}$.

\begin{lemma}\cite[Theorem 1]{BD}
 Let $R=GR(4,m), m>0$, and $\Te$ be the Teichmuller set.
 \begin{enumerate}[(a)]
   \item The multiset $\Te+2\Te=\{X+2Y\mid X,Y\in\Te\}$ contains each element of $R$
         with multiplicity one.
   \item The multiset $\Te-\Te=\{X-Y\mid X,Y\in\Te\}$ contains $0$ with multiplicity $2^m$,
             no other elements of $2R$, and the elements outside $2R$ with multiplicity one.
   \item The multiset $\Te+\Te=\{X+Y\mid X,Y\in\Te\}$ contains the elements of $2R$ with
          multiplicity one, and half of the elements outside $2R$ with multiplicity two.
   \item The multiset $\Te+\Te$ and $-(\Te+\Te)$ coincide for $m$ even. For $m$ odd they intersect
         in $2R$. In particular, elements of $-\Te$ occur in $\Te+\Te$ only for $m$ even.
   \end{enumerate}
   \label{squareCount}
\end{lemma}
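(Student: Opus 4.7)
The plan is to treat all four parts through a single mechanism: apply Lemma~\ref{equivalency} to an appropriate coefficient vector $(e_X)$, which translates each equation $\sum e_X X = A+2B$ in $R$ into two binary equations over $\Fq$, and then count the $\Fq$-solutions. Part (a) needs no work beyond this observation: it is the standard $2$-adic expansion for Galois rings already recorded in Section~\ref{sec:prel}, saying every $z\in R$ has a unique decomposition $z = A+2B$ with $A,B\in\Te$, so $\Te+2\Te$ covers $R$ with multiplicity one.

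For parts (b) and (c) I would set the distinguished pair of $e$-coefficients equal to $\pm 1$. For the difference $X-Y$ with $X\ne Y$, Lemma~\ref{equivalency} produces $a=x+y$ and $b^2 = ya$, so $a=0$ forces $x=y$ (contradicting $X\ne Y$), while $a\ne 0$ uniquely determines $y=b^2/a$; this yields multiplicity one outside $2R$, together with $q$ diagonal hits at $0$. For the sum $X+Y$ with $X\ne Y$ I would instead get $a=x+y$ and $b^2=xy$, so $x,y$ are the two roots of $T^2+aT+b^2$ in $\Fq$; the classical trace criterion $\tr(b/a)=0$ governs solvability, and shows that exactly half of the elements outside $2R$ admit two ordered representations, while the diagonal case $X=Y$ hits each element of $2\Te = 2R$ exactly once.

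Part (d) requires computing the Teichm\"uller decomposition of $-(A+2B)$ explicitly. Using the identities $-1 = 1+2$ and $4=0$ in $\Z_4$, I would rewrite $-(A+2B) = A + 2(A+B)$, and then apply the part (c) calculation to the inner sum $A+B$ to obtain $-(A+2B) = A + 2C$ with $c = a+b$. Feeding this into the criterion from part (c), membership of $-(A+2B)$ in $\Te+\Te$ becomes $\tr((a+b)/a) = \tr(b/a)+\tr(1)$, so the condition shifts by $\tr(1) = m \bmod 2$. For $m$ even the shift vanishes and the two sets coincide; for $m$ odd the conditions are complementary outside $2R$, which forces $(\Te+\Te)\cap -(\Te+\Te)$ to equal $2R$. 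The final statement on $-\Te$ follows by specializing to $a=b$ (since $-X = X+2X$), where the criterion reduces to $\tr(1)=0$.

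The main obstacle I anticipate is the bookkeeping in part (d): one must propagate the negation through the Teichm\"uller expansion correctly, relying on the identities $-1=1+2$, $-2=2$ and $4=0$ in $\Z_4$ and on a careful application of the part (c) decomposition to the auxiliary sum $A+B$. Once this is handled, everything else reduces to a mechanical use of Lemma~\ref{equivalency} together with the classical trace criterion for the solvability of a quadratic in characteristic $2$.
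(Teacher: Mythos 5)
Your argument is correct, and there is in fact nothing in the paper to compare it against: Lemma~\ref{squareCount} is imported from Bonnecaze and Duursma as \cite[Theorem 1]{BD} without proof. What you have done is reconstruct it by the same mechanism the paper itself uses for the three- and four-fold analogues (Lemma~\ref{cubicCount} and Corollary~\ref{coro: quaCount}), namely Lemma~\ref{equivalency} combined with the trace criterion for quadratics in characteristic $2$, so your route is methodologically the ``native'' one for this paper even though the original source may have argued differently. The individual reductions all check out: for (b) the system $a=x+y$, $b^2=ya$ has no solution with $a=0$ and $x\neq y$, and exactly one for each $a\neq 0$; for (c) the quadratic $T^2+aT+b^2$ has two distinct roots in $\Fq$ precisely when $a\neq 0$ and $\tr(b/a)=0$, which for fixed $a\neq 0$ holds for exactly half the values of $b$, while the diagonal $X=Y$ sweeps out $2\Te=2R$ once; and your key computation for (d), $-(A+2B)=A+2(A\oplus B)$ with reduction $c=a+b$ (using $-1=1+2$, $-2=2$ in $\Z_4$ and $2(A+B)=2(A\oplus B)$), correctly turns the membership test into $\tr(1)+\tr(b/a)=0$, yielding coincidence of the two multisets for $m$ even and intersection exactly $2R$ for $m$ odd. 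The only caveat worth recording is that $0\in-\Te$ lies in $\Te+\Te$ for every $m$, so the final clause of (d) should be read as a statement about the nonzero elements of $-\Te$; your specialization $a=b\neq 0$ is precisely that nontrivial case, where the criterion degenerates to $\tr(1)=0$.
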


The next two lemmas are natural generalizations of Lemma \ref{squareCount}.

\begin{lemma}
 Let $R=GR(4,m), m$ odd, and $\Te$ be the Teichmuller set.
 Then
 \begin{enumerate}
   \item the multiset $\Te+\Te+\Te=\{X+Y+Z\mid X,Y,Z\in\Te\}$ contains
    each element of $-\Te$ with multiplicity one, and the elements outside $-\Te$ with multiplicity $2^m+1$;
   \item the multiset $\Te+\Te-\Te=\{X+Y-Z\mid X,Y,Z\in\Te\}$ contains
   each element of $\Te$ with multiplicity $2^{m+1}-1$, and the elements outside $\Te$ with multiplicity $2^m-1$.
\end{enumerate}
\label{cubicCount}
\end{lemma}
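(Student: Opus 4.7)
The plan is to count, for each $V \in R$, the number of ordered triples $(X,Y,Z) \in \Te^3$ with $X+Y+Z = V$ (for part 1) and with $X+Y-Z = V$ (for part 2). I will write $V = A + 2B$ in its Teichm\"uller expansion with $A, B \in \Te$, so that $V \in \Te$ iff $B = 0$ and, for part (1), $V \in -\Te$ iff $A = B$ (because $-X_0 = X_0 + 2X_0$ is the Teichm\"uller decomposition of $-X_0$ for $X_0 \in \Te$, using $-1 = 1 + 2$ in $\Zfour$). I then partition $\Te^3$ by the equality pattern of $(X,Y,Z)$ and, for each pattern, apply Lemma~\ref{equivalency} with the appropriate coefficient vector $(e_X)$ to rewrite the $R$-equation as two equations over $\Fq$ in $a = \mu(A)$ and $b = \mu(B)$.

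For part (1), the ``all equal'' pattern contributes $1$ triple precisely when $A = B$; the ``exactly two equal'' pattern contributes $3$ ordered triples (one for each choice of which position is distinguished) precisely when $A \neq B$; and the ``all distinct'' pattern yields the system
\[
x + y + z = a, \qquad xy + yz + zx = b^2.
\]
Eliminating $z$ and then shifting $(x,y) \mapsto (u+a, v+a)$ collapses this to $u^2 + uv + v^2 = (a+b)^2$ with $u, v, u+v \in \Fq^*$. The key algebraic input is that for $m$ odd one has $3 \nmid 2^m - 1$, so $t^2 + t + 1$ is irreducible over $\Fq$. Consequently, when $a = b$ there are no valid $(u,v)$; when $a \neq b$ the substitution $t = u/v$ gives a bijection between $t \in \Fq \setminus \{0,1\}$ and valid $(u,v)$ via $v = (a+b)/(t + \sqrt{t+1})$, producing exactly $q-2$ ordered triples. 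Summing the three contributions yields multiplicity $1$ on $-\Te$ and $3 + (q-2) = q+1$ off $-\Te$.

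For part (2), the asymmetry introduced by $-Z$ forces a finer case split: patterns $X = Z \neq Y$ and $Y = Z \neq X$ each contribute $q-1$ ordered triples and force $V \in \Te$; $X = Y = Z$ contributes $1$ triple (also with $V \in \Te$); $X = Y \neq Z$ contributes $1$ triple precisely when $V \notin \Te$. In the all-distinct case, Lemma~\ref{equivalency} now yields
\[
x + y + z = a, \qquad z^2 + xy + xz + yz = b^2,
\]
which after eliminating $z$ factors as $(x+a)(x+z) = b^2$. When $b = 0$ this forces $x = a$ or $x = z$, contradicting distinctness; when $b \neq 0$ the parametrization $u = x+a$, $v = x+z$ reduces to $uv = b^2$ with $u, v, u+v \in \Fq^*$, giving $q-2$ ordered triples. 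Totalling: $V \in \Te$ has multiplicity $2(q-1) + 1 = 2q-1$ and $V \notin \Te$ has multiplicity $(q-2) + 1 = q-1$.

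The main obstacle is the careful bookkeeping across many equality patterns and the accurate translation, via Lemma~\ref{equivalency}, of the abstract $R$-equations into equations over $\Fq$; the core algebraic fact driving both parts is the non-existence of primitive cube roots of unity in $\Fq$ for $m$ odd. The identities $q \cdot 1 + (q^2 - q)(q+1) = q^3$ and $q(2q-1) + (q^2 - q)(q-1) = q^3$ provide a sanity check on the totals.
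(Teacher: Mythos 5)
Your proof is correct, and it reaches the paper's counts by a somewhat different decomposition. The paper fixes the target element $C=A+2B$, splits on whether $C\in-\Te$ and then on whether $A=0$ or $A\neq 0$ (using scaling and a shift to normalize to $X+Y+Z=2$ resp.\ $X+Y+Z=1+2B$ with $b=0$), and then counts \emph{all} solutions of the resulting binary system from Lemma~\ref{equivalency} at once via the parametrization $y=tx$ --- the degenerate triples simply appear as particular solutions such as $(0,1,1)$. You instead partition $\Te^3$ by the equality pattern of $(X,Y,Z)$, dispose of the degenerate patterns by direct Teichm\"uller bookkeeping (e.g.\ $2X+Z=Z+2(X\oplus Z)$), and only invoke Lemma~\ref{equivalency} for the all-distinct pattern, where your shifts produce the clean normal forms $u^2+uv+v^2=(a+b)^2$ and $uv=b^2$ with $u,v,u+v\neq 0$. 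Both arguments hinge on the same key fact, the irreducibility of $t^2+t+1$ over $\Fq$ for $m$ odd. Your version buys a cleaner separation between combinatorial and algebraic contributions, a built-in consistency check ($1+3+(q-2)$ etc.), and --- unlike the paper, which only writes out part (1) and asserts part (2) is ``similar'' --- a full treatment of the asymmetric case $\Te+\Te-\Te$, where the factorization $(x+a)(y+a)=b^2$ is genuinely different from the conic in part (1). The paper's version is shorter because the WLOG normalizations let it solve one equation per case.
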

\begin{proof}
Here we only give the proof of $(1)$, since a similar argument
will prove $(2)$.
We need to investigate the number of solutions of the following equation
\[ X+Y+Z=C, \] where $X,Y,Z\in\Te$ and $C\in R$.
The element $C$ can be uniquely expressed as $C=A+2B$, where $A,B\in\Te$.

First if $C\in-\Te$ i.e., $A=B$, then we have $X+Y+Z+A=0$, implying $X=Y=Z=A$.
So the multiset $\Te+\Te+\Te$ contains each element of $-\Te$
with multiplicity one.

Next if $C\notin -\Te,$ i.e., $A\neq B$, we split our consideration into two
cases $A=0$ and $A\neq 0$.

When $A=0$, without loss of generality, we consider the equation
\begin{equation}
X+Y+Z=2,
\label{X+Y+Z=2}
\end{equation}
which is equivalent to the following system of binary equations
\begin{eqnarray}
  \left\{
  \begin{array}{lllllll}
    x&+&y&+&z&=& 0,\\
    xy&+&xz&+&yz&=& 1,\\
  \end{array}
  \right.
  \label{sys_X+Y+Z=2}
\end{eqnarray}
by Lemma \ref{equivalency}. From this system, we have
$$ x^2+y^2+xy+1=0. $$
Assume that $x=0,$ we get one solution $$(x,y,z)=(0,1,1).$$
Assume that $x\neq 0$ and $y=tx$ for some $t\in \Fpm$, then we obtain
$$ (t^2+t+1)x^2+1=0, $$
implying
\[(x,y,z)=\left(\frac{1}{\sqrt{t^2+t+1}},\frac{t}{\sqrt{t^2+t+1}},\frac{t+1}{\sqrt{t^2+t+1}}\right).\]
Hence there are totally $2^m+1$ solutions for Equation (\ref{X+Y+Z=2}).

When $A\neq0$, without loss of generality, we only need to investigate the equation
\begin{equation}
 X+Y+Z=1+2B,
  \label{X+Y+Z=1+2B}
\end{equation}
where $B\in\Te$ and $B\neq1$.
By Lemma \ref{equivalency}, Equation (\ref{X+Y+Z=1+2B}) is equivalent to the following system
\begin{eqnarray}
  \left\{
  \begin{array}{lllllll}
    x&+&y&+&z&=& 1,\\
    xy&+&xz&+&yz&=& b^2,\\
  \end{array}
  \right.
  \label{sys_X+Y+Z=1+2B}
\end{eqnarray}
from which we have
\begin{equation}
 x^2+xy+y^2+x+y+b^2=0.
  \label{app_X+Y+Z=1+2B}
\end{equation}
Now let $x=(1+b)u+b$ and $y=(1+b)v+b$. Then the above equation becomes
$$
u^2+uv+v^2+u+v=0,
$$
so without loss of generality, we can assume $b=0$ in Equation (\ref{app_X+Y+Z=1+2B}).

Assume that $x=0,$ then we get two solutions $(x,y,z)=(0,0,1)$ and $(x,y,z)=(0,1,0)$.

Assume that $x\neq 0$ and $y=tx,$ for some $t\in \Fpm$, then we have
$$ (t^2+t+1)x^2+(t+1)x=0$$
implying $$(x,y,z)=(0,0,1)$$ or
$$(x,y,z)=\left(\frac{t+1}{t^2+t+1},\frac{t^2+t}{t^2+t+1},\frac{t}{t^2+t+1}\right).$$
If $t=1,$ there is only one solution $(x,y,z)=(0,0,1)$.
Hence there are totally $2^m+1$ solutions for Equation (\ref{app_X+Y+Z=1+2B}).
\end{proof}\qed

The above lemma implies the following result immediately.

\begin{corollary}\label{coro: quaCount}
 Let $R=GR(4,m), m$ odd, and $\Te$ be the Teichmuller set.
 \begin{enumerate}[(a)]
   \item The multiset $\Te+\Te+\Te+\Te=\{X+Y+Z+W:X,Y,Z,W\in\Te\}$ contains $0$ with multiplicity $2^m$,
     the elements of $2R\backslash\{0\}$ with multiplicity $2^m(2^m+1)$, and the elements outside $2R$
     with multiplicity $2^{2m}$.
   \item The multiset $\Te+\Te-\Te-\Te=\{X+Y-Z-W:X,Y,Z,W\in\Te\}$ contains $0$ with multiplicity $(2^{m+1}-1)2^m$,
     the elements of $2R\backslash\{0\}$ with multiplicity $(2^m-1)2^m$, and the elements outside $2R$
     with multiplicity $2^{2m}$.
   \item The multiset $\Te+\Te+\Te-\Te=\{X+Y+Z-W:X,Y,Z,W\in\Te\}$ contains the elements of $2R$ with multiplicity $2^{2m}$,
     the elements of $(\Te+\Te)\backslash 2R$ with multiplicity $(2^m+1)2^m$, and the elements outside $\Te+\Te$ with multiplicity $(2^m-1)2^m$.
 \end{enumerate}
\end{corollary}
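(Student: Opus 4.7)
The plan is to realize each four-fold multiset as a ``convolution'' of one of the three-fold multisets already counted in Lemma~\ref{squareCount} or Lemma~\ref{cubicCount} with one additional copy of $\pm\Te$, and then reduce the resulting formula to multiplicities that are already tabulated. Writing $N(C)$ for the multiplicity of $C\in R$ in the target four-fold multiset, every decomposition gives
\[
N(C)=\sum_{W\in\Te} n_3(C\mp W),
\]
where $n_3$ is the multiplicity function on the relevant three-fold multiset. In every case $n_3$ takes only two values, so $N(C)$ collapses to an affine function of a single counting that I can recognize as a multiplicity in either $\Te-\Te$ or $\Te+\Te$.

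For (a) use $\Te+\Te+\Te+\Te=(\Te+\Te+\Te)+\Te$ together with Lemma~\ref{cubicCount}(1); the two values $1$ and $2^m+1$ give
\[
N_a(C)=2^m(2^m+1)-2^m M(C),\qquad M(C)=\#\{W\in\Te:C-W\in-\Te\},
\]
and a direct rewriting identifies $M(C)$ with the multiplicity of $C$ in $\Te-\Te$. Reading off $2^m, 0, 1$ from Lemma~\ref{squareCount}(b) yields the three multiplicities in (a). Part (b) is exactly the same argument applied to $\Te+\Te-\Te-\Te=(\Te+\Te-\Te)-\Te$ with Lemma~\ref{cubicCount}(2); the correction term is again a multiplicity in $\Te-\Te$ and the arithmetic is immediate.

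Part (c) is the only case that requires extra care. Using $\Te+\Te+\Te-\Te=(\Te+\Te+\Te)-\Te$ and Lemma~\ref{cubicCount}(1) one obtains
\[
N_c(C)=2^m(2^m+1)-2^m\,\ell(C),\qquad \ell(C)=\#\{W\in\Te:C+W\in-\Te\},
\]
and $\ell(C)$ equals the multiplicity of $-C$ in $\Te+\Te$. The main obstacle is translating this back into a condition on $C$ itself: Lemma~\ref{squareCount}(c) pins $\ell(C)$ to $1$ for $C\in 2R$ and to either $0$ or $2$ otherwise, and Lemma~\ref{squareCount}(d) --- which is where the hypothesis that $m$ is odd enters decisively --- asserts $(\Te+\Te)\cap(-(\Te+\Te))=2R$, so outside $2R$ exactly one of $C$ and $-C$ lies in $\Te+\Te$. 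Hence $\ell(C)=0$ precisely when $C\in(\Te+\Te)\setminus 2R$ and $\ell(C)=2$ precisely when $C\notin\Te+\Te$, and substituting these into the formula delivers the three multiplicities claimed in (c).
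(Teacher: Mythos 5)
Your proof is correct. The paper states this corollary as an immediate consequence of Lemma~\ref{cubicCount} and supplies no details; your convolution argument, which reduces each four-fold count to a multiplicity in $\Te-\Te$ or $\Te+\Te$ via Lemma~\ref{squareCount}, is precisely the intended derivation, including the use of part (d) of that lemma (where $m$ odd enters) together with the counting in part (c) to see that, outside $2R$, exactly one of $C$ and $-C$ lies in $\Te+\Te$.
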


Let $f_a(x)=x^3+x+a$ and set
$$M_i=\left\{ a\in\Fpm,a\neq 0\mid f_a(x)=0\quad\textup{has precisely $i$ solutions in $\Fpm$} \right\}$$
for $i=0,1,3.$ The exact values of the three numbers $|M_0|,|M_1|,|M_3|$ have been computed in
the appendix of \cite{KHCH}:
\begin{eqnarray*}
  |M_0|&=& \frac{q+1}{3},\\
  |M_1|&=& \frac{q}{2}-1,\quad\textup{and}\\
  |M_3|&=& \frac{q-2}{6}.
\end{eqnarray*}


\begin{lemma}\label{x3+x}
  Let $f_a(x)=x^3+x+a$ for some $a\in\Fpm.$
  \begin{enumerate}
    \item[(1)] If $a=0$, then $f_a(x)$ has two zeroes $x=0$ and $1$ in $\Fpm$.
    \item[(2)] If $a=b+b^{-1}$ for some $b\in\Fpm\backslash\{0,1\}$, then $f_a(x)$
          has one and only one zero in $\Fpm$.
    \item[(3)] If $a=b^{-1}+b^{-3}$ for some $b\in\Fpm\backslash\{0,1\}$ satisfying $\tr(b)=1$,
              then $f_a(x)$ has three distinct zeroes in $\Fpm$.
    \item[(4)] If $a$ satisfies none of the above conditions, then $f_a(x)$ is irreducible
          over $\Fpm$.
  \end{enumerate}
  \label{cubicSolutions}
\end{lemma}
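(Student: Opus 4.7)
The backbone of the argument is a root-counting case analysis. The key observation is that once a root $r \in \Fpm$ of $f_a$ is produced, polynomial division gives
\[
f_a(x) = (x+r)(x^2 + rx + r^2 + 1),
\]
and by the standard Artin--Schreier splitting criterion in characteristic $2$ the quadratic factor is reducible over $\Fpm$ iff $\tr((r^2+1)/r^2) = \tr(1 + r^{-2}) = 0$. Using $\tr(z^2) = \tr(z)$ together with $m$ odd (so $\tr(1)=1$), this condition is equivalent to $\tr(r^{-1}) = 1$. Hence when $a \neq 0$ and $f_a$ admits a root $r$, the number of roots in $\Fpm$ is one if $\tr(r^{-1}) = 0$ and three if $\tr(r^{-1}) = 1$; of course this trace must be independent of the chosen root.

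Case (1) is immediate since $x^3 + x = x(x+1)^2$. For case (2), exploit that $\gcd(3,\,2^m-1) = 1$ when $m$ is odd, so each $b \in \Fpm^*$ has a unique cube root $c = b^{1/3}$; setting $r := c + c^{-1}$ and expanding in characteristic $2$ yields $r^3 + r = c^3 + c^{-3} = b + b^{-1} = a$, so $r$ is a root. The trace computation $\tr(r^{-1}) = 0$ is the crucial point: substituting $u := c+1$ (which is nonzero because $b \neq 1$ forces $c \neq 1$) gives
\[
r^{-1} = \frac{c}{(c+1)^2} = \frac{u+1}{u^2} = u^{-1} + u^{-2},
\]
so $\tr(r^{-1}) = \tr(u^{-1}) + \tr(u^{-1}) = 0$. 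Hence $f_a$ has exactly one root. For case (3) one takes $r := b^{-1}$ and checks $r^3 + r = b^{-3} + b^{-1} = a$ with $\tr(r^{-1}) = \tr(b) = 1$, producing three roots.

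Case (4) I would handle by the contrapositive: if $a \neq 0$ and $f_a$ admits any root $r \in \Fpm$ (automatically $r \neq 0$), then $a$ already falls into (2) or (3). If $\tr(r^{-1}) = 1$, take $b := r^{-1}$, which satisfies $\tr(b) = 1$, $b \neq 0, 1$, and $a = b^{-1} + b^{-3}$, so we are in case (3). If $\tr(r^{-1}) = 0$, the quadratic $c^2 + rc + 1$ (in $c$) splits over $\Fpm$ by the same Artin--Schreier criterion, its trace condition being $\tr(r^{-2}) = 0$; picking a root $c$ (necessarily $c \neq 0, 1$) and setting $b := c^3$ gives $a = c^3 + c^{-3} = b + b^{-1}$, placing $a$ in case (2). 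Since a cubic with no root in $\Fpm$ is automatically irreducible, any $a$ fitting none of (1)--(3) yields an irreducible $f_a$.

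The main obstacle I anticipate is the slightly unexpected identity in case (2): one must notice that $\tr(r^{-1}) = 0$ follows from the substitution $u = c+1$ turning $r^{-1}$ into $u^{-1} + u^{-2}$, whose trace is forced to vanish in characteristic $2$. Everything else is routine bookkeeping. As a consistency check on the enumeration, the map $b \mapsto b + b^{-1}$ on $\Fpm^* \setminus \{1\}$ is $2$-to-$1$ with image of size $(q-2)/2 = |M_1|$, while $b \mapsto b^{-1} + b^{-3}$ restricted to $\{b : \tr(b)=1,\, b \neq 1\}$ is $3$-to-$1$ (the three preimages over $a$ being $\{r^{-1} : r \text{ is a root of } f_a\}$) with image of size $(q-2)/6 = |M_3|$; so the remaining $(q+1)/3 = |M_0|$ values of $a$ land in case (4), matching the cardinalities quoted from \cite{KHCH}.
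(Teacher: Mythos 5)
Your proof is correct, but it takes a genuinely different route from the paper's. The paper quotes an external criterion (from the literature) that $f_a(x)=0$ has a unique solution in $\Fpm$ if and only if $\tr(1/a)=0$, verifies that trace condition for $a=b+b^{-1}$ and $a=b^{-1}+b^{-3}$, and then \emph{counts}: it checks that the sets $\{b+b^{-1}\}$ and $\{b^{-1}+b^{-3}\mid \tr(b)=1\}$ have sizes exactly $|M_1|=(q-2)/2$ and $|M_3|=(q-2)/6$ (the latter via a small argument showing the second map is $3$-to-$1$), so that the parameterizations exhaust $M_1$ and $M_3$ and part (4) follows by elimination against the known values of $|M_0|,|M_1|,|M_3|$ from the cited appendix. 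You instead make the whole thing self-contained: the factorization $f_a(x)=(x+r)(x^2+rx+r^2+1)$ plus the Artin--Schreier splitting criterion replaces the quoted $\tr(1/a)$ criterion with the equivalent root-based criterion $\tr(r^{-1})\in\{0,1\}$, your cases (2) and (3) exhibit explicit roots ($r=b^{1/3}+b^{-1/3}$ and $r=b^{-1}$), and crucially your contrapositive argument for (4) --- every root $r$ of $f_a$ with $a\ne 0$ produces a $b$ placing $a$ in case (2) or (3) --- eliminates any need for the cardinality bookkeeping, which you correctly relegate to a consistency check. The trade-off is that the paper's version is shorter given the quoted criterion and the precomputed $|M_i|$, while yours is constructive, independent of those external inputs, and makes the surjectivity of the parameterizations (the point where the paper leans on counting) transparent. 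All the details you supply check out, including the trace computation $\tr\bigl(c/(c+1)^2\bigr)=\tr(u^{-1}+u^{-2})=0$ in case (2), which is the same algebraic trick the paper applies to $\tr\bigl(b/(b+1)^2\bigr)$, just at the level of a root rather than of $a$.
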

\begin{proof}
  (1) is immediate. One finds in the literature (see \cite[p.169]{B}) that
  $f_a(x)=0$ has a unique solution in $\Fpm$ if and only if $\tr(1/a)=0.$
  If $a=b+b^{-1}$ for some $b\in\Fpm\backslash\{0,1\}$, then
  $$\tr\left(\frac{1}{b+b^{-1}}\right)=\tr\left(\frac{b}{b^2+1}\right)=
  \tr\left(\frac{b}{b+1}+\left(\frac{b}{b+1}\right)^2\right)=0.$$
  It is easy to check that the size of the set $\{b+b^{-1}\mid b\in\Fpm\backslash\{0,1\}\}$
  is equal to $|M_1|$. Then (2) follows.

  If $a=b^{-1}+b^{-3}$ for some $b\in\Fpm\backslash\{0,1\}$ satisfying $\tr(b)=1$, then we have
  $$\tr\left(\frac{1}{b^{-1}+b^{-3}}\right)=\tr\left( b+\frac{b}{b^2+1} \right)=\tr(b)=1.$$
  Since $b^{-1}$ is already a zero of $f_a(x)$, we see that $f_a(x)$ must has three distinct zeroes
  in $\Fpm$. The next thing is to check that the cardinality of the set
  $\{b^{-1}+b^{-3}\mid b\in\Fpm\backslash\{0,1\},\tr(b)=1\}$ is the same as $|M_3|$.
  Suppose that $b^{-1}+b^{-3}=c^{-1}+c^{-3}$ for some $b\in\Fpm\backslash\{0,1\}$ satisfying $\tr(b)=1$
  and $c\in\Fpm\backslash\{0,1\}$. We see that
  \begin{eqnarray*}
    &b^{-1}+b^{-3}=c^{-1}+c^{-3}\\
  \Longleftrightarrow&(b+c)(b^2c^2+b^2+bc+c^2)=0\\
  \Longleftrightarrow& c=b \textup{ or } c^2+(b/(b^2+1))c+b^2/(b^2+1)=0.\\
  \end{eqnarray*}
  Because
  $$\tr\left( \frac{b^2/(b^2+1)}{b^2/(b^2+1)^2} \right)=\tr(b^2+1)=0,$$
  the equation $c^2+(b/(b^2+1))c+b^2/(b^2+1)=0$ has two distinct zeroes in $\Fpm.$
  From the symmetry of the equation $b^2c^2+b^2+bc+c^2=0$, the value $\tr(c)$ must equal to $1$.
  Now it follows that the size of the set
  $\{b^{-1}+b^{-3}\mid b\in\Fpm\backslash\{0,1\},\tr(b)=1\}$ is $(q-2)/6=|M_3|$, finishing
  the prove of (3). Then (4) is immediate.
\end{proof}\qed

\begin{lemma}
Let $W\in\Te$. The number of solutions to the following system of equations
$$
\left\{
  \begin{array}{lllllllll}
  X&+&Y&-&Z&=&W&+&2;\\
  x^3&+&y^3&+&z^3&=&w^3&+&d,
  \end{array}
\right.
$$
is
(1) $1$, when $d=0$; (2) $2$, when $d\in M_1$; (3) $0$, when $d\in M_0\cup M_3$.
\end{lemma}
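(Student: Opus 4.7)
The plan is to convert the $R$-equation into a pair of equations in $\Fq$, combine with the cubic condition, and reduce the whole system to counting roots of a quadratic whose solvability is governed by $\tr(1/d)$.

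First, rewrite $X + Y - Z = W + 2$ as $X + Y + 3Z + 3W = 2$ in $R$, whose right-hand side is $A + 2B$ with $A = 0$, $B = 1$. In the generic case (all of $X, Y, Z, W$ distinct) Lemma~\ref{equivalency} gives $x + y + z + w = 0$ together with $z^2 + w^2 + (xy + xz + xw + yz + yw + zw) = 1$, and the latter simplifies, upon using $z + w = x + y$, to $xy + zw = 1$. The coincident cases yield the same two equations by a direct Teichm\"uller-lift expansion of $X+Y-Z-W$.

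Next, eliminate $z$ via $z = x + y + w$ and set $u = x+y$, $v = xy$. The relation $xy + zw = 1$ becomes $v + uw + w^2 = 1$. For the cubic equation $x^3 + y^3 + z^3 + w^3 = d$, the characteristic-$2$ identities $x^3 + y^3 = u^3 + uv$ and $(u+w)^3 = u^3 + u^2 w + uw^2 + w^3$ collapse its left-hand side to $u(v + uw + w^2)$; substituting $v + uw + w^2 = 1$ reduces it to $u = d$. Hence $x + y = d$, $xy = 1 + dw + w^2$, $z = d + w$, and $(x, y)$ are the roots in $\Fq$ of $T^2 + dT + (1 + dw + w^2) = 0$.

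Finally, count the roots. When $d = 0$ the quadratic becomes $T^2 = (1+w)^2$, giving the unique ordered triple $(x,y,z) = (1+w, 1+w, w)$. When $d \neq 0$ the standard characteristic-$2$ criterion says there are two ordered solutions iff $\tr((1 + dw + w^2)/d^2) = 0$; using $\tr(t^2) = \tr(t)$ this trace evaluates to $\tr(1/d) + 2\tr(w/d) = \tr(1/d)$. By Lemma~\ref{x3+x}, for $d \in \Fq^*$ we have $d \in M_1 \iff \tr(1/d) = 0$ and $d \in M_0 \cup M_3 \iff \tr(1/d) = 1$, giving the claimed counts of $2$ and $0$ respectively. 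The main obstacle is Step~$1$: Lemma~\ref{equivalency} covers the generic case cleanly, but confirming robustness under coincidences among $X, Y, Z, W$ requires either a case analysis or a direct expansion via the Teichm\"uller-lift identity.
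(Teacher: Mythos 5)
Your proof is correct and follows essentially the same route as the paper's: reduce the Galois-ring equation to $x+y=z+w$ and $xy=zw+1$ over $\Fq$, deduce $x+y=d$ from the cubic condition, and count roots of the resulting quadratic via the criterion $\tr(1/d)=0$, which is exactly what separates $M_1$ from $M_0\cup M_3$. The one obstacle you flag --- coincidences among $X,Y,Z,W$ when invoking Lemma~\ref{equivalency} --- is precisely what the paper sidesteps by instead writing $X+Y=(\sqrt{X}+\sqrt{Y})^2+2\sqrt{XY}$ on both sides of $X+Y=2+Z+W$ and comparing the unique $A+2B$ decompositions, which yields the same two $\Fq$-equations with no case analysis.
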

\begin{proof}
Using the fact that $X+Y=(\sqrt{X}+\sqrt{Y})^2+2\sqrt{XY}$,
the first equation $X+Y=2+Z+W$ translates to the following
two equations over $\Fpm$:
\[
x+y=z+w,\quad xy=zw+1.
\]
Now we compute
\begin{align*}
d&=x^3+y^3+z^3+w^3\\
&=(x+y)^3+xy(x+y)+z^3+w^3\\
&=(z+w)^3+(zw+1)(z+w)+z^3+w^3\\
& =z+w.
\end{align*}
Hence $z=d+w$ and $y=d+x$. Plugging them into $xy=zw+1$,
we get $(x+z)^2+d(x+z)+1=0.$
When $d=0$, we get $x=1+z=d+w+1$, so this system only has $1$ solution.
When $d\ne 0$, we have
$(x+z)^2/d^2+(x+z)/{d}+{1}/{d^2}=0$.
This equation has $0$ or $2$ solutions depending on
whether $\tr({1}/{d})=1$ or not.
\end{proof}\qed

\begin{lemma}\label{lemma:N_6_1}
Let $A, B\in\Te$ and $B\neq 0$.
The number of solutions to the following system of equations
$$
\left\{
  \begin{array}{lllllllll}
  X&+&Y&-&Z&=&A&+&2B;\\
  x^3&+&y^3&+&z^3&=&a^3&+&b^3d,
  \end{array}
\right.
$$
is
(1) $1$, when $d=0$; (2) $2$, when $d\in M_1$; (3) $0$, when $d\in M_0\cup M_3$.
\end{lemma}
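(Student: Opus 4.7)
The plan is to reduce this statement to the preceding lemma by a multiplicative normalization that eliminates $B$. Since $B \in \Te \setminus \{0\}$ lies in the cyclic multiplicative group $\Te^* \subset R^*$ of order $2^m - 1$, it has an inverse $B^{-1} \in \Te^*$, and the map $X \mapsto BX$ is a bijection on $\Te$ (fixing $0$). Accordingly, I would substitute $X = B X_1$, $Y = B Y_1$, $Z = B Z_1$ with $X_1, Y_1, Z_1 \in \Te$. The first equation $X + Y - Z = A + 2B$ becomes $B(X_1 + Y_1 - Z_1) = A + 2B$, and after cancelling $B$ on the left we get $X_1 + Y_1 - Z_1 = AB^{-1} + 2$, where $A' := A B^{-1} \in \Te$ (equal to $0$ when $A=0$, otherwise in $\Te^*$).

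For the binary cubic equation, applying the ring homomorphism $\mu$ to the quaternary substitution produces the compatible binary substitution $x = b x_1$, $y = b y_1$, $z = b z_1$ with $b \neq 0$. Dividing $x^3 + y^3 + z^3 = a^3 + b^3 d$ through by $b^3$ yields $x_1^3 + y_1^3 + z_1^3 = (a b^{-1})^3 + d$, and $ab^{-1} = \mu(A')$ is precisely the projection of $A'$. Thus the system in $(X_1,Y_1,Z_1)$ is exactly the system treated in the preceding lemma, with the parameter $W$ taken to be $A'$. Since the change of variables is a bijection on $\Te^3$ compatible with its mod-$2$ reduction, the number of solutions is preserved.

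Invoking the preceding lemma at $W = A'$ then gives the claim: the count depends only on $d$, and equals $1$ when $d = 0$, $2$ when $d \in M_1$, and $0$ when $d \in M_0 \cup M_3$. The only real thing to verify (and the mildest of obstacles here) is the compatibility between the quaternary substitution $X \mapsto B X_1$ and the induced binary substitution $x \mapsto b x_1$ at the level of the cubic equation. This is automatic from $\mu$ being a ring homomorphism together with $\mu(\Te) = \Fpm$ and $\mu(B) = b$, so the entire argument reduces to the normalization $B \mapsto 1$ already handled.
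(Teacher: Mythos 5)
Your proposal is correct and matches the paper's intent: the paper proves the normalized case $B=1$ (the unnamed lemma immediately preceding, with parameter $W$) by a direct computation and then states Lemma~\ref{lemma:N_6_1} without proof, the general case being exactly the scaling reduction $X\mapsto BX_1$ (with $W=AB^{-1}\in\Te$) that you carry out. Your verification that the substitution is a bijection on $\Te^3$ compatible with reduction mod $2$, and that the count depends only on $d$, is precisely the missing step, so nothing further is needed.
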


Similarly, we get the following result.

\begin{lemma}\label{lemma:N_6_2}
Let $A, B\in\Te$ and $B\neq 0$.
The number of solutions to the following system
$$
\left\{
  \begin{array}{lllllllll}
    X&+&Y&+&Z&=&-A&+&2B,\\
    x^3&+&y^3&+&z^3&=&a^3&+&b^3d,
  \end{array}
  \right.
$$
is (1) $3$, when $d=0$; (2) $0$, when $d\in M_0\cup M_1$; (3) $6$, when $d\in M_3$.
\end{lemma}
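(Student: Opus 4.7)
The plan is to follow the same strategy used for Lemma~\ref{lemma:N_6_1} (which is proved ``similarly'' in the excerpt): translate the $R$-level equation into two binary identities over $\Fq$ via Lemma~\ref{equivalency}, use the cubic hypothesis to pin down the three elementary symmetric functions of $x,y,z$, and finally count roots of an Artin--Schreier-type cubic via Lemma~\ref{cubicSolutions}.

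First, since $-A=3A$ in $R=GR(4,m)$, the equation $X+Y+Z=-A+2B$ is equivalent to $X+Y+Z+A=2B$ in $R$. Applying Lemma~\ref{equivalency} with the $\Z_4$-coefficient vector equal to $1$ on each of $X,Y,Z,A$ (the $E_j$ formulation absorbs any coincidences among these four Teichm\"uller elements uniformly, so no separate case analysis is required) yields
\[
x+y+z+a=0,\qquad xy+xz+xa+yz+ya+za=b^{2}.
\]
Substituting the first identity into the second gives $s_1:=x+y+z=a$ and $s_2:=xy+xz+yz=a^{2}+b^{2}$.

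Next, the characteristic-two Newton identity $x^{3}+y^{3}+z^{3}=s_1^{3}+s_1 s_2+s_3$, combined with the hypothesis $x^{3}+y^{3}+z^{3}=a^{3}+b^{3}d$, forces $s_3:=xyz=a^{3}+ab^{2}+b^{3}d$. Hence $x,y,z$ are (as a multiset) the $\Fq$-roots of
\[
t^{3}+a t^{2}+(a^{2}+b^{2})t+(a^{3}+ab^{2}+b^{3}d)=0.
\]
Since $B\neq 0$, the substitution $t=a+bu$ cleanly wipes out everything except the cubic, linear, and constant terms in $u$, reducing the equation to $b^{3}(u^{3}+u+d)=0$, i.e.\ to $u^{3}+u+d=0$.

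Finally, by Lemma~\ref{cubicSolutions}: (i) for $d=0$, $u^{3}+u=u(u+1)^{2}$ has root multiset $\{0,1,1\}$, so the $t$-roots form $\{a,a+b,a+b\}$ (distinct since $b\neq 0$), giving exactly $3$ ordered triples $(x,y,z)$; (ii) for $d\in M_0\cup M_1$ the cubic has at most one simple $\Fq$-root, so no element of $\Fq^{3}$ realizes the full root multiset and there are $0$ solutions; (iii) for $d\in M_3$ there are three distinct $\Fq$-roots, producing $3!=6$ ordered triples. Teichm\"uller lifting gives a bijection between such $(x,y,z)$ and the $(X,Y,Z)\in\Te^{3}$ satisfying both original equations (by Lemma~\ref{equivalency} applied in reverse), yielding the claimed counts. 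The only technical point worth singling out is the tidy collapse of the cubic in $t$ to $u^{3}+u+d$ under $t=a+bu$, after which Lemma~\ref{cubicSolutions} does all the real counting.
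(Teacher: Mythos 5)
Your proof is correct, and it is essentially the argument the paper intends: the paper omits the proof of this lemma (deferring to ``similarly''), and the implicit route is exactly yours --- descend to $\Fq$ via Lemma~\ref{equivalency}, reduce to the cubic $u^3+u+d$ after the shift $t=a+bu$, and count roots through $M_0,M_1,M_3$ and Lemma~\ref{cubicSolutions}. Your explicit verification that the two binary equations of Lemma~\ref{equivalency} specialize consistently when some of $X,Y,Z,A$ coincide, and the Newton-identity bookkeeping of $s_1,s_2,s_3$, are exactly the details the paper leaves unstated, and your counts ($3$, $0$, $6$, summing over $d$ to $q+1$ in agreement with Lemma~\ref{cubicCount}) check out.
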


The following corollaries can be easily verified from the proof of the above lemmas.

\begin{corollary}\label{coro: N_8_1}
Let $0\neq B\in\Te.$ The number of solutions to the following system of equations
$$
\left\{
  \begin{array}{lllllllll}
  X&+&Y&-&Z&-W&=&2B;\\
  x^3&+&y^3&+&z^3+&w^3&=&b^3d,
  \end{array}
\right.
$$
is
(1) $2^m$, when $d=0$; (2) $2^{m+1}$, when $d\in M_1$; (3) $0$, when $d\in M_0\cup M_3$.
\end{corollary}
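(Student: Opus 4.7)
The plan is to reduce the four-variable count to Lemma~\ref{lemma:N_6_1} by treating $W$ as a parameter and summing over its $2^m$ possible values in $\Te$.

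First I would fix $W\in\Te$ and transfer $W$ (respectively $w^3$) to the right-hand side of each equation. In $R=GR(4,m)$, the first equation becomes $X+Y-Z = W + 2B$, and by Lemma~\ref{squareCount}(a) this right-hand side is already expressed in the unique Teichm\"uller-decomposed form $A+2B$ with $A=W\in\Te$ and $B\in\Te\setminus\{0\}$. In $\Fpm$, the cube equation becomes $x^3+y^3+z^3 = w^3 + b^3 d$, matching the form $a^3+b^3 d$ of Lemma~\ref{lemma:N_6_1} with $a=w$ and the same parameter $d$. So for each fixed $W$, the count of admissible triples $(X,Y,Z)\in\Te^3$ is precisely what Lemma~\ref{lemma:N_6_1} provides: $1$ when $d=0$, $2$ when $d\in M_1$, and $0$ when $d\in M_0\cup M_3$.

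Since distinct choices of $W$ yield distinct quadruples $(X,Y,Z,W)$, summing over the $2^m$ elements of $\Te$ produces the claimed totals of $2^m$, $2^{m+1}$, and $0$ respectively. I do not foresee any genuine obstacle: the only point worth flagging is that Lemma~\ref{lemma:N_6_1} requires only $B\neq 0$ and does not require $A\neq 0$, so the case $W=0$ is absorbed by the same invocation and needs no separate argument. In effect, this corollary is simply a one-parameter lift of Lemma~\ref{lemma:N_6_1}, which is why the author remarks that it follows directly from the proof of the previous lemmas.
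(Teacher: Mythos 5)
Your reduction is correct and is precisely what the paper intends: the authors give no separate argument, remarking only that the corollaries ``can be easily verified from the proof of the above lemmas,'' and your parametrization by $W\in\Te$ followed by an application of Lemma~\ref{lemma:N_6_1} with $A=W$ (summing the counts $1$, $2$, $0$ over the $2^m$ choices of $W$) is the intended derivation. Your observation that the lemma permits $A=0$, so $W=0$ needs no special treatment, closes the only point that could have required comment.
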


\begin{corollary}\label{coro: N_8_2}
Let $0\neq B\in\Te.$ The number of solutions to the following system
$$
\left\{
  \begin{array}{lllllllll}
X&+&Y&+&Z&+&W&=& 2B;\\
x^3&+&y^3&+&z^3&+&w^3&=&b^3d,
\end{array}
\right.
$$
is
(1) $3\cdot 2^m$, when $d=0$; (2) $0$, when $d\in M_0\cup M_1$; (3) $6\cdot 2^m$,  when $d\in M_3$.
\end{corollary}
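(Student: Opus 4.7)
The plan is to reduce the $4$-variable count to the $3$-variable count of Lemma~\ref{lemma:N_6_2} by fixing one of the four variables. Fix $W\in\Te$ arbitrarily; then the given system is equivalent to
\[
X+Y+Z=-W+2B,\qquad x^3+y^3+z^3=w^3+b^3d,
\]
where the cubic equation has been rewritten using the fact that $\Fpm$ has characteristic $2$, so that $w^3$ may be moved to the right-hand side with no sign change. Setting $A:=W\in\Te$, the Galois-ring equation is exactly the one appearing in Lemma~\ref{lemma:N_6_2}, and the cubic equation matches the one in that lemma (with its parameter equal to our $d$), because $a^3=w^3$ and $b\neq 0$ together force the lemma's discriminant parameter to coincide with $d$. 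The hypothesis $B\neq 0$ is inherited, and Lemma~\ref{lemma:N_6_2} places no restriction on $A$, so in particular the case $W=0$ is permitted.

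Lemma~\ref{lemma:N_6_2} therefore gives, for each fixed $W\in\Te$, exactly $3$ triples $(X,Y,Z)\in\Te^3$ when $d=0$, none when $d\in M_0\cup M_1$, and exactly $6$ when $d\in M_3$; crucially, this count is independent of the choice of $W$. Summing the constant contribution over the $|\Te|=2^m$ values of $W$ yields the totals $3\cdot 2^m$, $0$, and $6\cdot 2^m$, respectively, which is the statement of the corollary. The argument is a verbatim analogue of the way Corollary~\ref{coro: N_8_1} is obtained from its preceding $3$-variable lemma, and there is no real obstacle once the one-variable reduction is observed; the only minor verification is that the matching of cubic parameters is clean thanks to characteristic~$2$ and $b\neq 0$.
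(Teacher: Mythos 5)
Your reduction is correct: fixing $W$ and rewriting the system as $X+Y+Z=-W+2B$, $x^3+y^3+z^3=w^3+b^3d$ puts it exactly in the form of Lemma~\ref{lemma:N_6_2} with $A=W$ (so $a^3=w^3$), the count is independent of $W$, and summing over the $2^m$ choices of $W$ gives $3\cdot 2^m$, $0$, or $6\cdot 2^m$. This is precisely the verification the paper has in mind when it says the corollary follows from the preceding lemmas, so no further comment is needed.
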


\renewcommand{\thesection}{8}
\section{Appendix C}
\renewcommand{\thesection}{C}
For $(a,b)\in G\times \Fq$, the sum $\xi(a,b)$ has the following properties:
\begin{enumerate}
  \item If $b\neq0,$ then $\xi(a,b)=\xi(aB^{-1/3},1)$;
  \item If $U,V\in\Te,W\in\Tem$, then $\xi(U+2V,b)=\xi(UW+2VW,bw^3)$.
\end{enumerate}

Now we fix some notation. Let $a$ and $c$ be elements of $R$,
and write them as $a=U+2V$ and $c=S+2T$ where $U,V,S,T\in\Te.$
For convenience, we define $\eta_a$ as $\eta_a=\xi(a,1).$
Let $u$ be the projection of $a$ modulo $2$ in $\Fq$.
Set
$$f_{u}(z)=z^2+u^2z+\sqrt{z}+u$$
and $F_u$ be the zeros of $f_{u}(z)$ in $\Fq.$
Also set
$$h_{u}(z)=f_u(z)-u=z^2+u^2z+\sqrt{z}$$
and $H_u$ be the zeros of $h_u(z)$ in $\Fq.$
It is easy to see $u^2\in F_u$ and
$$F_u=\{x+u^2\mid x\in H_u\},$$
so $|F_u|=|H_u|$.
For each $x\in H_u$, we have $\tr(ux)=\tr(u^2x^2)=\tr(x^3+x^{3/2})=0$, which yields that $\tr(uy)=\tr(u^3)$ for each $y\in F_u$.
For $X,Y\in\Te,$ we see that
$$X+Y=(\sqrt{X}+\sqrt{Y})^2+2\sqrt{XY}.$$
The element $(\sqrt{X}+\sqrt{Y})^2\in\Te$
will be denoted as $X\oplus Y$ in the following.
Here come some useful results involving $\eta_a$.

\begin{lemma}\label{lemma:eta}
Let $a\in R$ and $u$ be the projection of $a$ modulo $2$ in $\Fq$.
The exponential sum $\eta_a$ satisfies:
  \begin{align*}
     \eta_a^2&=2^m\sum_{Z\in\Te\atop{f_u(z)=0}} i^{\T(aZ+2Z^3)},&&
     \eta_a\overline{\eta_a}=2^m\sum_{Z\in\Te\atop{h_u(z)=0}} i^{\T(aZ+2Z^3)},&&\\
     \eta_a^4&=2^{m}(-1)^{\tr(u^3)}|F_u|\eta_a\overline{\eta_a},&&
     (\eta_a\overline{\eta_a})^2= 2^{m}|F_u|\eta_a\overline{\eta_a},&&
     \eta_a^3\overline{\eta_a}= 2^{m}|F_u|\eta_a^2.
  \end{align*}
\end{lemma}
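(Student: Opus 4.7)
The plan is to prove the two base identities for $\eta_a^2$ and $\eta_a\overline{\eta_a}$ by direct expansion, and then derive the remaining three identities from them using the structural features $F_u = u^2 + H_u$ and the additive closure of $H_u$.

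For the first identity, expand $\eta_a^2 = \ssTe i^{\T(a(X+Y) + 2(X^3 + Y^3))}$ and apply the Teichm\"uller decomposition $X + Y = Z + 2W$ from Lemma~\ref{equivalency}, with $Z, W \in \Te$ the lifts of $z = x + y$ and $w = \sqrt{xy}$. A mod-$4$ calculation gives $2(X^3+Y^3) \equiv 2(Z^3 + W^2 Z) \pmod 4$, and the compact identity $\tr(uw + w^2 z) = \tr(xy(u^2 + z))$ (from $\tr(x^2) = \tr(x)$ applied to $(uw)^2 = u^2 xy$ and to $w^2 z = xyz$) rewrites the summand as $i^{\T(aZ + 2Z^3)}(-1)^{\tr(xy(u^2+z))}$. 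Reparametrizing by $(y, z) \in \Fq \times \Fq$ with $x = y + z$ and substituting $y = s^2$ linearizes the exponent to $\tr(s(\sqrt{u} + u\sqrt{z} + z^{1/4} + z))$, so character orthogonality over $\Fq$ returns $2^m$ precisely when $\sqrt{u} + u\sqrt{z} + z^{1/4} + z = 0$; raising to the fourth power yields $f_u(z)^2 = 0$, equivalent to $f_u(z) = 0$. The companion computation for $\eta_a\overline{\eta_a}$ picks up an extra $(-1)^{\tr(uy)}$ coming from $\T(a(X-Y)) \equiv \T(a(X+Y)) + 2\tr(uy) \pmod 4$, which cancels the $\tr(uy)$ term already present inside $\tr(xy(u^2+z))$ and leaves the condition $u\sqrt{z} + z + z^{1/4} = 0$, whose fourth power is $h_u(z)^2$.

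For the three derived identities, set $\Sigma_F := \eta_a^2/2^m$ and $\Sigma_H := \eta_a\overline{\eta_a}/2^m$, so the claims reduce to $\Sigma_F^2 = (-1)^{\tr(u^3)}|F_u|\Sigma_H$, $\Sigma_H^2 = |F_u|\Sigma_H$, and $\Sigma_F\Sigma_H = |F_u|\Sigma_F$. Expand each product and apply the same Teichm\"uller decomposition together with the formula $\tr(uw + w^2 z) = \tr(z_1 z_2(u^2 + z))$. The key structural inputs are that $H_u$ is an $\mathbb{F}_2$-subspace of $\Fq$ (since $z \mapsto z^2 + u^2 z + \sqrt{z}$ is $\mathbb{F}_2$-linear), $F_u = u^2 + H_u$ via $f_u(z + u^2) = h_u(z)$, and the trace identities $\tr(uz) = 0$ for $z \in H_u$ and $\tr(uz) = \tr(u^3)$ for $z \in F_u$; together with the reduction $\sqrt{z} = z(z + u^2)$ on $H_u$, these collapse $\tr(z_1 z_2(u^2 + z))$ to $\tr(u^3)$ in the $\Sigma_F^2$ case and to $0$ in the other two, while the fiber of the change of variables contributes $|F_u|$ equal copies.

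The main obstacle will be keeping the char-$2$ trace manipulations (especially $\tr(x^2 y) = \tr(x\sqrt{y})$ and $\sqrt{a + b} = \sqrt{a} + \sqrt{b}$) organized and realizing that the polynomial conditions defining $F_u$ and $H_u$ only emerge after fourth-powering the linearized orthogonality conditions. Once the compact reformulation $\tr(uw + w^2 z) = \tr(xy(u^2 + z))$ is in hand, both the base identities and the three derived ones follow by parallel computations.
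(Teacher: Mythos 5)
Your proposal is correct and follows essentially the same route as the paper's: both reduce $\eta_a^2$ and $\eta_a\overline{\eta_a}$ to an additive character sum over one Teichm\"uller coordinate that isolates the conditions $f_u(z)=0$ and $h_u(z)=0$, and both derive the degree-four identities from the $\mathbb{F}_2$-subspace structure of $H_u$, the coset relation $F_u=u^2+H_u$, and the trace values $\tr(uz)=0$ on $H_u$ versus $\tr(uz)=\tr(u^3)$ on $F_u$, with the change of variables contributing the factor $|F_u|$. The only cosmetic difference is that you linearize via $y=s^2$ and recover $f_u$ and $h_u$ by fourth-powering, whereas the paper reads the same conditions off directly from the character sum $\sum_{y\in\Fq}(-1)^{\tr(y(z^2+u^2z+\sqrt{z}+u))}$.
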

\begin{proof}
We first compute
\begin{eqnarray*}
  \eta_a^2&=&\sx\sy i^{\T(a(X+Y)+2(X^3+Y^3))}
  = \sy\sz i^{\T(a(Y\oplus Z+Y)+2((Y\oplus Z)^3+Y^3))}\\
  &=& \sy\sz i^{T(aZ+2Z^3+2(aY+a\sqrt{YZ}+Y^2Z+Z^2Y+Z^3))}\\
  &=& \sz i^{\T(aZ+2Z^3)}\sum_{y\in\Fq} (-1)^{\tr(y(z^2+u^2z+\sqrt{z}+u))}\\
  &=& 2^m\,\sum_{Z\in\Te\atop{f_u(z)=0}} i^{\T(aZ+2Z^3)}.
\end{eqnarray*}
A similar analysis would give
$$\eta_a\overline{\eta_a}=2^m\sum_{Z\in\Te\atop{h_u(z)=0}} i^{\T(aZ+2Z^3)}.$$
Then
\begin{eqnarray*}
  \eta_a^4&=& 2^{2m}\sum_{Z\in\Te\atop{f_u(z)=0}}\sum_{W\in\Te\atop{f_u(w)=0}}i^{\T(a(Z+W)+2(Z^3+W^3))}\\
  &=& 2^{2m}\sum_{Z\in\Te\atop{f_u(z)=0}}\sum_{W\in\Te\atop{h_u(w)=0}}i^{\T(a(Z+Z\oplus W)+2(Z^3+(Z\oplus W)^3))}\\
  &=&2^{2m}\sum_{W\in\Te\atop{h_u(w)=0}}i^{\T(aW+2W^3)}\sum_{Z\in\Te\atop{f_u(z)=0}}(-1)^{\tr(uz+w(z^2+u^2z+\sqrt{z}))}\\
  &=&2^{2m}\sum_{W\in\Te\atop{h_u(w)=0}}i^{\T(aW+2W^3)}\sum_{Z\in\Te\atop{f_u(z)=0}}(-1)^{\tr(u(z+w))}\\
  &=& 2^{m}(-1)^{\tr(u^3)}|F_u|\eta_a\overline{\eta_a}.
\end{eqnarray*}
The same calculations as above will prove the rest of equations.
\end{proof}\qed

\begin{lemma}\label{lemma:N_8_1}
Let $c\in R\backslash 2R$ and $d\in\Fq$. Then $c$ can be
expressed uniquely as $c=F-G$ where $F,G\in\Te$. We have
\begin{eqnarray*}
  \mathbf{E}(c,d)
              &=& \left\{
                \begin{array}{ll}
                  2^{3m+4}(3\cdot2^{m-1}-1), &\uif d=f^3+g^3;\\
                  2^{3m+4}(2^{m-1}-1), &\uif d\neq f^3+g^3,
                \end{array}
                \right.
\end{eqnarray*}
where $f,g$ are the projections of $F,G$ modulo $2$ in $\Fq.$
\end{lemma}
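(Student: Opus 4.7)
The plan is to expand $\mathbf{E}(c,d)$ by unfolding the definition of $\xi(a,b)$, swapping the order of summation, and reducing the problem to a counting question in $\Te^4$. Writing
\[
\xi^4(a,b) = \sum_{X_1,X_2,X_3,X_4\in\Te} i^{\T(a(X_1+X_2+X_3+X_4) + 2b(X_1^3+X_2^3+X_3^3+X_4^3))}
\]
together with the analogous expansions of $\overline{\xi^4(a,b)}$ and $\xi^2(a,b)\overline{\xi^2(a,b)}$, substituting into the definition of $\mathbf{E}(c,d)$, and then applying the character identities $\suma i^{\T(ae)} = 4^m\,\mathbf{1}[e=0 \text{ in }R]$ and $\sumb i^{\T(2bf)} = 2^m\,\mathbf{1}[\mu(f)=0]$, I expect to obtain
\[
\mathbf{E}(c,d) = 2^{3m}\bigl(N_+ + N_- + 6\,N_0\bigr),
\]
where $N_\pm$ counts quadruples $(X_1,X_2,X_3,X_4)\in\Te^4$ with $\sum_i X_i = \mp c$ in $R$ and $\sum_i x_i^3 = d$ in $\Fq$ (using $-d=d$ in characteristic $2$), and $N_0$ counts quadruples with $X_1+X_2-X_3-X_4 = -c$ and $x_1^3+x_2^3+x_3^3+x_4^3 = d$. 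The involution $(F,G)\mapsto(G,F)$ interchanges $c$ and $-c$ while fixing $d$, so $N_+=N_-$, and thus $\mathbf{E}(c,d) = 2^{3m}(2N_+ + 6N_0)$.

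To compute $N_+$ and $N_0$, I would fix the last variable $X_4 = W\in\Te$ and reduce to a three-variable counting problem. For $N_+$ the reduced system is $X_1+X_2+X_3 = -c-W$ in $R$ together with $x_1^3+x_2^3+x_3^3 = d+w^3$ in $\Fq$; for $N_0$ it becomes $X_1+X_2-X_3 = -c+W$ with the same cubic equation. Using the identity $-c = (F\oplus G) + 2(\sqrt{FG}+F)$ and absorbing $\pm W$ into the Teichmuller decomposition, the right-hand sides take the form $A+2B$ with $A,B\in\Te$. When $B\neq 0$, Lemmas~\ref{lemma:N_6_1} and~\ref{lemma:N_6_2} (possibly after a substitution $X_i\mapsto X_iU$ with $U\in\Tem$ to bring the system into canonical form) yield a count of $0$, $1$, $2$, $3$, or $6$ according to which of $\{0\}$, $M_0$, $M_1$, or $M_3$ contains the normalized cubic parameter. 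The finitely many boundary values of $W$ for which the Teichmuller decomposition degenerates (the vanishing of $A$ or $B$ being controlled by a short trace argument) must be handled separately, in the spirit of the proof of Lemma~\ref{cubicCount}.

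The final step is to sum these three-tuple counts over $W\in\Te$. Using $|M_0| = (q+1)/3$, $|M_1| = q/2-1$, $|M_3| = (q-2)/6$ together with an explicit description of how many $W$ land in each cubic class (governed by elementary trace conditions in $f,g,d$), one obtains closed-form expressions for $N_+$ and $N_0$, and hence for $\mathbf{E}(c,d)$. The main obstacle I anticipate is precisely this bookkeeping: for each $W\in\Te$, one must pinpoint which cubic class contains the parameter $(d+w^3-A^3)/B^3$ and then show that, after summation, the intricate $W$-by-$W$ dependence collapses to a dichotomy driven solely by the indicator $d = f^3+g^3$ (equivalent in characteristic $2$ to $d+f^3+g^3 = 0$). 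Verifying that this collapse produces exactly the coefficients $3\cdot 2^{m-1}-1$ and $2^{m-1}-1$ claimed in the lemma is the computational heart of the argument.
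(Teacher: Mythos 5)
Your opening reduction is sound: expanding $\xi^4$, $\overline{\xi^4}$ and $\xi^2\overline{\xi^2}$ as sums over $\Te^4$ and applying orthogonality over $R$ and over $\Fq$ does give $\mathbf{E}(c,d)=2^{3m}(N_++N_-+6N_0)$ with the counts you describe (and this normalization checks out in aggregate against Corollary~\ref{coro: quaCount}). It is also a genuinely different route from the paper's: there, $\mathbf{E}(c,d)$ is evaluated by splitting the $(a,b)$-sum into four ranges ($a$ a unit or not, $b$ zero or not) and invoking the Gauss-sum identities of Lemma~\ref{lemma:eta}, which reduce $\eta_a^4$ and $\eta_a^2\overline{\eta_a^2}$ to the root counts $|F_u|$ of $z^2+u^2z+\sqrt{z}+u$; the $b$-sum then collapses because $\bigl(2(-1)^{\tr(1+b^{-1}s^{-3})}+6\bigr)|F_{b^{1/6}s^{1/2}+b^{-1/12}s^{-1/4}}|$ is identically $16$. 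Your counting route is the one the paper itself follows for the case $c\in 2R$ (Corollaries~\ref{coro: N_8_1}--\ref{coro: N_8_2}), so it is a priori a reasonable strategy for $c\notin 2R$ as well.

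The difficulty is that your argument stops exactly where the lemma begins. Everything after the reduction is a description of work to be done rather than the work itself: for each $W\in\Te$ you must (i) produce the Teichm\"uller decomposition $A+2B$ of $G-F+W$ (for $N_0$) and of $G-F\mp W$ (for $N_\pm$), where $A$ and $B$ are messy functions of $f,g,w$ involving square roots; (ii) locate and separately treat the degenerate $W$ with $B=0$ or with the right-hand side in $\pm\Te$ or $2R$, where Lemmas~\ref{lemma:N_6_1}--\ref{lemma:N_6_2} do not apply; (iii) decide, via the trace criteria of Lemma~\ref{cubicSolutions}, which of $\{0\}$, $M_0$, $M_1$, $M_3$ contains the normalized parameter $(d+w^3+a^3)/b^3$, as $W$ varies; and (iv) prove that the resulting sum over $W$ of the counts $\{0,1,2,3,6\}$ depends on $(F,G,d)$ only through the indicator of $d=f^3+g^3$ and equals the stated coefficients. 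You explicitly defer all of (i)--(iv) as ``bookkeeping,'' but that bookkeeping \emph{is} the content of the lemma; asserting that the $W$-by-$W$ dependence ``collapses to a dichotomy'' is a restatement of the claim, not a proof. A smaller point: the involution $(F,G)\mapsto(G,F)$ does not show $N_+=N_-$ for a fixed $c$ --- it merely interchanges the two counts --- though this is harmless since only $N_++N_-$ enters the final expression.
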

\begin{proof}
It is direct to check that $$st^2+s^{-1}t^4+s^3=f^3+g^3$$ and $$\tr(s^{-3}(f^3+g^3))=1.$$

Let $X\in\Te$ and $B\in\Tem$. We first compute that
\begin{eqnarray*}
\mathcal{U}&:=&i^{\T(Xc)}\sum_{Y\in\Te}\eta_{(X+2Y)B^{-1/3}}\overline{\eta_{(X+2Y)B^{-1/3}}}(-1)^{\tr(ys)}\\
&=&2^mi^{\T(Xc)}\sum_{Y\in\Te}\sum_{Z\in\Te\atop{h_{x'}(z)=0}}i^{\T((X+2Y)B^{-1/3})Z+2Z^3)}(-1)^{\tr(ys)}\\
&=&2^mi^{\T(Xc)}\sum_{Z\in\Te\atop{h_{x'}(z)=0}}i^{\T(XB^{-1/3}Z+2Z^3)}\sum_{Y\in\Te}(-1)^{\tr(y(b^{-1/3}z+s))}\\
&=&\left\{
  \begin{array}{ll}
    2^{2m}(-1)^{\tr(b(f^3+g^3))},&\uif X=B^{\frac{1}{2}}S^{\frac{1}{2}}\oplus B^{\frac{1}{4}}S^{-\frac{1}{4}};\\
0,& \textup{ otherwise,}
  \end{array}
  \right.
\end{eqnarray*}
where $x'=xb^{-1/3}$.
Notice that $B^{\frac{1}{2}}S^{\frac{1}{2}}\oplus B^{\frac{1}{4}}S^{-\frac{1}{4}}=0$ if $b=s^{-3}$.

Let $x,s\in\Fq^*$ and $b\in\Fq^*\backslash\{s^{-3}\}$.
We consider the set $H_x$. Immediately $0$ is a root of $h_x(z)$.
The equation $h_x(z)=0$ is equivalent to $$(h_x(z))^2=z(z^3+x^4z+1)=0.$$
Replace $z$ with $x^2w$, then $z^3+x^4z+1=0$ becomes $w^3+w+x^{-6}=0$.\\
If $$x=(bs)^{1/2}+(bs^{-1})^{1/4},$$ then
$$x^6=b^{1/2}s^{3/2}+bs^3+b^{-1/2}s^{-3/2}+1$$
and
$$x^{-6}=(1+b^{-1/2}s^{-3/2})^{-1}+(1+b^{-1/2}s^{-3/2})^{-3}.$$
Using Lemma \ref{cubicSolutions}, we see that if
$$\tr(1+b^{-1/2}s^{-3/2})=0,$$
then $$\tr(x^6)=\tr(1+b^{-1}s^{-3})=0,$$
implying
$$|F_{b^{\frac{1}{6}}s^{\frac{1}{2}}+b^{-\frac{1}{12}}s^{-\frac{1}{4}}}|=|H_{b^{\frac{1}{6}}s^{\frac{1}{2}}+b^{-\frac{1}{12}}s^{-\frac{1}{4}}}|=2;$$
otherwise we have
$$|F_{b^{\frac{1}{6}}s^{\frac{1}{2}}+b^{-\frac{1}{12}}s^{-\frac{1}{4}}}|=|H_{b^{\frac{1}{6}}s^{\frac{1}{2}}+b^{-\frac{1}{12}}s^{-\frac{1}{4}}}|=4.$$
So the expression
$$(2(-1)^{\tr(1+b^{-1}s^{-3})}+6)|F_{b^{\frac{1}{6}}s^{\frac{1}{2}}+b^{-\frac{1}{12}}s^{-\frac{1}{4}}}|$$
is always equal to 16.

The sum $\mathbf{E}(c,d)$ will be divided into four parts, and computed
separately with the assistance of Lemma~\ref{lemma:eta}.
We first compute
\begin{eqnarray*}
\mathbf{E}(c,d)_1
&=&\sum_{a\in R^*}\sum_{b\in\Fq^*}\left(\xi^4(a,b)+\overline{\xi^4(a,b)}
+6\,\xi^2(a,b)\overline{\xi^2(a,b)}\right)i^{\T(ac+2bd)}\\
&=&\sum_{X\in\Tem}\sum_{Y\in\Te}\sum_{b\in\Fq^*}\left(\xi^4(X+2Y,b)
    +\overline{\xi^4(X+2Y,b)}\right.\\
    &&\qquad\qquad\qquad\qquad\qquad\left.+6\xi^2(X+2Y,b)\overline{\xi^2(X+2Y,b)}\right)i^{\T((X+2Y)c+2bd)}\\
&=&\sum_{X\in\Tem}\sum_{Y\in\Te}\sum_{b\in\Fq^*}\left(\eta^4_{(X+2Y)B^{-\frac{1}{3}}}+\overline{\eta^4_{(X+2Y)B^{-\frac{1}{3}}}}\right.\\
    &&\qquad\qquad\qquad\qquad\qquad\left.+6\eta^2_{(X+2Y)B^{-\frac{1}{3}}}\overline{\eta^2_{(X+2Y)B^{-\frac{1}{3}}}}\right)i^{\T((X+2Y)c+2bd)}\\
&=&2^m\sum_{X\in\Tem}\sum_{b\in\Fq^*}(-1)^{\tr(bd)}\left(2(-1)^{\tr(x^3b^{-1})}+6\right)|F_{xb^{-\frac{1}{3}}}|\,\mathcal{U}\\
&=&2^{3m}\sum_{b\in\Fq^*\backslash\{s^{-3}\}}\left[\left(2(-1)^{\tr(1+b^{-1}s^{-3})}+6\right)|F_{b^{\frac{1}{6}}s^{\frac{1}{2}}+b^{-\frac{1}{12}}s^{-\frac{1}{4}}}|\right]
(-1)^{\tr(b(f^3+g^3+d))}\\
&=&2^{3m+4}\sum_{b\in\Fq^*\backslash\{s^{-3}\}}(-1)^{\tr(b(f^3+g^3+d)).}\\
\end{eqnarray*}
Next,
\begin{eqnarray*}
\mathbf{E}(c,d)_2&=&\sum_{X\in\Tem}\sum_{Y\in\Te}\left(\xi^4(X+2Y,0)+\overline{\xi^4(X+2Y,0)}\right.\\
&&\qquad\qquad\qquad\qquad\qquad\left.+6\xi^2(X+2Y,0)\overline{\xi^2(X+2Y,0)}\right)i^{\T((X+2Y)c)}\\
&=&\sum_{X\in\Tem}\sum_{Y\in\Te}\left(2^{2m}i^{\T(2)}+2^{2m}i^{-\T(2)}+6\cdot2^{2m}\right)i^{\T((X+2Y)c)}\\
&=& 2^{2m+2}\sum_{X\in\Tem}i^{\T(Xc)}\sum_{Y\in\Te}(-1)^{\tr(ys)}\\
&=&0.
\end{eqnarray*}
Then,
\begin{eqnarray*}
\mathbf{E}(c,d)_3&=&\sum_{Y\in\Te}\sum_{b\in\Fq^*}\left(\xi^4(2Y,b)+\overline{\xi^4(2Y,b)}+6\xi^2(2Y,b)\overline{\xi^2(2Y,b)}\right)i^{\T(2Yc+2bd)}\\
&=&\sum_{Y\in\Te}\sum_{b\in\Fq^*}\left(2^{2m+4}(1+(-1)^{\tr(yb^{-1/3}+1)}\right)(-1)^{\tr(ys+bd)}\\
&=&2^{2m+4}\sum_{b\in\Fq^*}(-1)^{\tr(bd+1)}\sum_{Y\in\Te}(-1)^{\tr(y(b^{-1/3}+s))}\\
&=& 2^{3m+4}(-1)^{\tr(s^{-3}d+1)}.
\end{eqnarray*}
At last,
\begin{eqnarray*}
\mathbf{E}(c,d)_4&=&\xi^4(0,0)+\overline{\xi^4(0,0)}+6\xi^2(0,0)\overline{\xi^2(0,0)}=2^{4m+3}.
\end{eqnarray*}
Adding $\mathbf{E}(c,d)_1,\mathbf{E}(c,d)_2,\mathbf{E}(c,d)_3$,
and $\mathbf{E}(c,d)_4$ up will complete the proof.
\end{proof}\qed

\begin{lemma}\label{lemma:N_8_2}
Let $c\in R\backslash 2R$ and $d\in\Fq.$ There exist $F,G\in\Te$ such that
$c=F+G$ with $F\neq G.$ We have
\begin{eqnarray*}
  \mathbf{F}(c,d)
              &=& \left\{
                \begin{array}{ll}
                  2^{3m+2}(3\cdot2^{m-1}-1), &\uif d=f^3+g^3;\\
                  2^{3m+2}(2^{m-1}-1),      &\uif d\neq f^3+g^3,\\
                \end{array}
                \right.
\end{eqnarray*}
where $f,g$ are the projections of $F,G$ modulo $2$ in $\Fq.$
\end{lemma}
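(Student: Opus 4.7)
\emph{Proof strategy.} The plan is to mirror the proof of Lemma~\ref{lemma:N_8_1} step by step, splitting the defining double sum of $\mathbf{F}(c,d)$ according as $a\in R^*$ or $a\in 2R$, and as $b=0$ or $b\in \Fq^*$, into four pieces $\mathbf{F}(c,d)_1,\ldots,\mathbf{F}(c,d)_4$ to be evaluated separately. The structural change from the previous lemma is that, in place of the identity $\eta_a^4+\overline{\eta_a^4}+6(\eta_a\overline{\eta_a})^2=(2(-1)^{\tr(u^3)}+6)\cdot 2^m|F_u|\,\eta_a\overline{\eta_a}$, the relevant consequence of Lemma~\ref{lemma:eta} is now
\[
\eta_a^3\overline{\eta_a}+\eta_a\overline{\eta_a^3}=2^m|F_u|\left(\eta_a^2+\overline{\eta_a^2}\right).
\]
Consequently, the dominant block reduces to a character sum involving the quadratic factor $\eta_a^2$ rather than the mixed factor $\eta_a\overline{\eta_a}$, and the zero set $F_u$ of $f_u$ plays the role previously played by the set $H_u$.

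For the main piece $\mathbf{F}(c,d)_1$ I would write $a=X+2Y$ with $X\in\Tem$ and $Y\in\Te$, substitute $\xi(a,b)=\eta_{(X+2Y)B^{-1/3}}$ via property~(2) of $\xi$, and expand $\eta_{aB^{-1/3}}^2=2^m\sum_{Z\in\Te,\,f_{xb^{-1/3}}(z)=0}i^{\T(aB^{-1/3}Z+2Z^3)}$. Summing over $Y\in\Te$ first produces an additive-character orthogonality that pins $Z$ as a specific function of $X$ and $B$ and kills most terms. The Teichm\"uller principal part of $c$ is needed next: since $c=F+G=(F\oplus G)+2\sqrt{FG}$, this principal part is $s=F\oplus G$ and satisfies $s^3=f^3+g^3+fg\cdot s$, so after substituting back, the $d$-dependence collapses to $(-1)^{\tr(b(f^3+g^3+d))}$, exactly as in Lemma~\ref{lemma:N_8_1}. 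Summing over $X\in\Tem$ and then over $b\in\Fq^*\setminus\{s^{-3}\}$ via $\sum_{b\in\Fq^*}(-1)^{\tr(b\lambda)}=-1+2^m\delta_{\lambda,0}$ produces the leading contribution $2^{3m+2}\sum_{b\in\Fq^*\setminus\{s^{-3}\}}(-1)^{\tr(b(f^3+g^3+d))}$.

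The remaining three blocks are small-scale calculations. $\mathbf{F}(c,d)_2$ (the $b=0$ slice) vanishes by the same orthogonality on $\Te$ that annihilated $\mathbf{E}(c,d)_2$; $\mathbf{F}(c,d)_3$ (the $a\in 2R\setminus\{0\}$, $b\ne 0$ slice) reduces via the explicit formula for $\xi(2Y,b)$ to a single $\delta$-type contribution depending on $\tr(s^{-3}d+1)$; and $\mathbf{F}(c,d)_4$ (the $a=b=0$ term) equals $2\,\xi(0,0)^4=2^{4m+1}$. Adding the four pieces and simplifying exactly as in the proof of Lemma~\ref{lemma:N_8_1} yields the two cases of the claimed formula.

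The main obstacle is that the Teichm\"uller decomposition of $c$ differs from the previous lemma: here $c=F+G$ rather than $c=F-G$, so every place in the proof of Lemma~\ref{lemma:N_8_1} that invokes the specific principal part $s$ or the exceptional value $b=s^{-3}$ must be reverified to ensure that the surviving conditions still collapse cleanly to a function of $f^3+g^3$. A secondary bookkeeping issue is the change of the leading constant from $2^{3m+4}$ to $2^{3m+2}$: in the $\mathbf{E}$ computation the combined factor $(2(-1)^{\tr(u^3)}+6)\cdot|F_u|$ is uniformly $16$, whereas in $\mathbf{F}$ one only has $|F_u|\in\{2,4\}$, and it must be checked that the missing factor of $4$ is exactly compensated by the cancellations in the orthogonality sums.
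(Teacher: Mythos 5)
Your four-part decomposition, the use of Lemma~\ref{lemma:eta} in the form $\eta_a^3\overline{\eta_a}+\eta_a\overline{\eta_a^3}=2^m|F_u|\bigl(\eta_a^2+\overline{\eta_a^2}\bigr)$, the identity $st^2+s^3=f^3+g^3$, and the evaluations of $\mathbf{F}(c,d)_2$, $\mathbf{F}(c,d)_3$ and $\mathbf{F}(c,d)_4$ all coincide with what the paper does. The gap is in the central piece $\mathbf{F}(c,d)_1$, which you dispose of by asserting that everything goes ``exactly as in Lemma~\ref{lemma:N_8_1}'' and lands on $2^{3m+2}\sum_{b\in\Fq^*\setminus\{s^{-3}\}}(-1)^{\tr(b(f^3+g^3+d))}$. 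That analogy breaks down precisely at the point you flag at the end. Because the building block is now $\eta_a^2+\overline{\eta_a^2}$ rather than $\eta_a\overline{\eta_a}$, the orthogonality over the $2\Te$-part of $a$ does pin $z$, but the residual condition $f_{xb^{-1/3}}(z)=0$ is a genuine quadratic in $x$: after normalizing $c$ to have Teichm\"uller part $1$, the paper's auxiliary sum $\mathcal{V}$ survives for \emph{two} values of the Teichm\"uller component, $U=B^{1/2}$ and $U=1\oplus B^{1/2}$, with weights $(1+(-1)^{\tr(b)})$ and $(-1+(-1)^{\tr(b)})$; these weights restrict the outer $b$-sum to $\tr(b)=0$ and $\tr(b)=1$ separately, and the factor $|F_u|$ is $2$ or $4$ according to $\tr(b)$ rather than combining into a uniform constant the way $(2(-1)^{\tr(u^3)}+6)|F_u|=16$ did for $\mathbf{E}$. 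The paper's resulting closed form is $2^{3m+2}\bigl(1+(-1)^{\tr(s^{-3}d+1)}\bigr)\sum_{b\in\Fq^*,\,\tr(b)=0}(-1)^{\tr(b(s^{-2}t^2+s^{-3}d))}$, not a full character sum over $b\neq s^{-3}$.

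As it happens, your guessed expression is numerically equal to the paper's in every case: using $\tr(s^{-1}t)=\tr\bigl(fg/(f+g)^2\bigr)=0$ and $s^{-2}t^2+s^{-3}d=s^{-3}(f^3+g^3+d)+1$, both evaluate to $2^{3m+2}(2^m-2)$ when $d=f^3+g^3$ and to $-2^{3m+2}\bigl(1+(-1)^{\tr(s^{-3}d+1)}\bigr)$ otherwise, so your final addition of the four pieces does reproduce the lemma. But as written the proposal does not \emph{derive} $\mathbf{F}(c,d)_1$; it substitutes a formula justified only by a structural analogy whose details are false, and the two caveats you raise at the end (the changed Teichm\"uller decomposition of $c$ and the missing factor of $4$) are exactly where the real work of the paper's proof lies.
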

\begin{proof}
It is easy to check that $$st^2+s^3=f^3+g^3.$$
Let $X\in\Te$ and $B\in\Tem$. We see
\begin{eqnarray*}
\mathcal{V}&:=&i^{\T(X)}\sum_{Y\in\Te}\left(\eta_{(X+2Y)B^{-1/3}}^3\overline{\eta_{(X+2Y)B^{-1/3}}}\right.\\
&&\qquad\qquad\qquad\qquad\qquad\left.+\eta_{(X+2Y)B^{-1/3}}\overline{\eta_{(X+2Y)B^{-1/3}}^3)}\right)(-1)^{\tr(y)}\\
&=&2^{2m}|K_{xb^{-1/3}}|i^{\T(X)}\sum_{Z\in\Te\atop{f_{x'}(z)=0}}\left( i^{\T(XB^{-1/3}Z+2Z^3)}\right.\\
&&\qquad\qquad\qquad\qquad\qquad\left.+i^{-\T(XB^{-1/3}Z+2Z^3)} \right)\sum_{Y\in\Te}(-1)^{\tr(v(b^{-1/3}z+1))}\\
&=&\left\{
  \begin{array}{ll}
2^{3m}|K_{b^{1/6}}|(1+(-1)^{\tr(b)}),&\uif U=B^{1/2};\\
2^{3m}|K_{b^{1/6}+b^{-1/3}}|(-1+(-1)^{\tr(b)}),&\uif U=1\oplus B^{1/2};\\
0,& \textup{ otherwise},
  \end{array}
  \right.
\end{eqnarray*}
where $x'=xb^{-1/3}$.

We first calculate
\begin{eqnarray*}
\mathbf{F}(c,d)_1&=&\sum_{X\in\Tem}\sum_{Y\in\Te}\sum_{b\in\Fq^*}\left(\xi^3(X+2Y,b)\overline{\xi(X+2Y,b)}\right.\\
&&\qquad\qquad\qquad\qquad\qquad\left.+\xi(X+2Y,b)\overline{\xi^3(X+2Y,b)}\right)i^{\T((X+2Y)c+2bd)}\\
&=&\sum_{X\in\Tem}\sum_{Y\in\Te}\sum_{b\in\Fq^*}\left(\xi^3((X+2Y)S,bs^3)\overline{\xi((X+2Y)S,bs^3)}\right.\\
&&\qquad\left.+\xi((X+2Y)S,bs^3)\overline{\xi^3((X+2Y)S,bs^3)}\right)i^{\T((X+2Y)S(1+2S^{-1}T)+2bd)}\\
&=&\sum_{X\in\Tem}\sum_{Y\in\Te}\sum_{b\in\Fq^*}\left(\xi^3(X+2Y,b)\overline{\xi(X+2Y,b)}\right.\\
&&\qquad\qquad\qquad\left.+\xi(X+2Y,b)\overline{\xi^3(X+2Y,b)}\right)i^{\T((X+2Y)(1+2S^{-1}T)+2bs^{-3}d)}\\
&=&\sum_{X\in\Tem}\sum_{b\in\Fq^*}i^{\T(2XS^{-1}T+2bs^{-3}d)}\,\mathcal{V}\\
&=&2^{3m}\sum_{b\in\Fq^*}|K_{b^{1/6}}|(-1)^{\tr(b^{1/2}s^{-1}t+bs^{-3}d)}(1+(-1)^{\tr(b)})\\
&&+2^{3m}\sum_{b\in\Fq\backslash\{0,1\}}|K_{b^{1/6}+b^{-1/3}}|(-1)^{\tr(b^{1/2}s^{-1}t+s^{-1}t+bs^{-3}d)}(-1+(-1)^{\tr(b)})\\
&=&2^{3m+2}\left(\sum_{b\in\Fq^*\atop{\tr(b)=0}}(-1)^{\tr(b(s^{-2}t^2+s^{-3}d))}+
\sum_{b\in\Fq\backslash\{0,1\}\atop{\tr(b)=1}}(-1)^{\tr(s^{-1}t+b(s^{-2}t^2+s^{-3}d))}\right)\\
&=&2^{3m+2}(1+(-1)^{\tr(s^{-3}d+1)})\sum_{b\in\Fq^*\atop{\tr(b)=0}}(-1)^{\tr(b(s^{-2}t^2+s^{-3}d))}\\
&=&\left\{
  \begin{array}{ll}
    2^{3m+3}(2^{m-1}-1), &\uif d=f^3+g^3;\\
    -2^{3m+2}(1+(-1)^{\tr(s^{-3}d+1)}),  &\textup{ otherwise}.
  \end{array}\right.
\end{eqnarray*}
Next,
\begin{eqnarray*}
\mathbf{F}(c,d)_2&=&\sum_{X\in\Tem}\sum_{Y\in\Te}\left(\xi^3(X+2Y,0)\overline{\xi(X+2Y,0)}\right.\\
&&\qquad\qquad\qquad\qquad\qquad\left.+\xi(X+2Y,0)\overline{\xi^3(X+2Y,0)}\right)i^{\T((X+2Y)c)}\\
&=&2^{2m}\sum_{X\in\Tem}\sum_{Y\in\Te}\left(i^{\T(1+2X^{-1}Y+(X+2Y)c)}+i^{\T(-1-2X^{-1}Y+(X+2Y)c)}\right)\\
&=& 2^{2m}\left(\sum_{X\in\Tem}i^{\T(1+Xc)}+\sum_{X\in\Tem}i^{\T(-1+Xc)}\right)\sum_{Y\in\Te}(-1)^{\tr(y(x^{-1}+s))}\\
&=& 2^{3m}\left( i^{\T(2+2s^{-1}t)}+i^{\T(2s^{-1}t)} \right)\\
&=& 0.
\end{eqnarray*}
Then we calculate
\begin{eqnarray*}
\mathbf{F}(c,d)_3&=&\sum_{Y\in\Te}\sum_{b\in\Fq^*}(\xi^3(2Y,b)\overline{\xi(2Y,b)}+\xi(2Y,b)\overline{\xi^3(2Y,b)})i^{\T(2Yc+2bd)}\\
&=&2^{2m+2}\sum_{Y\in\Te}\sum_{b\in\Fq^*}(1+(-1)^{\tr(yb^{-1/3}+1)})(-1)^{\tr(ys+bd)}\\
&=&2^{2m+2}\sum_{b\in\Fq^*}(-1)^{\tr(bd+1)}\sum_{Y\in\Te}(-1)^{\tr(y(b^{-1/3}+s))}\\
&=& 2^{3m+2}(-1)^{\tr(s^{-3}d+1)}
\end{eqnarray*}
At last,
\begin{eqnarray*}
\mathbf{F}(c,d)_4&=&\xi^3(0,0)\overline{\xi(0,0)}+\xi(0,0)\overline{\xi^3(0,0)}
=2^{4m+1}.
\end{eqnarray*}
Adding $\mathbf{F}(c,d)_1,\mathbf{F}(c,d)_2,\mathbf{F}(c,d)_3$,
and $\mathbf{F}(c,d)_4$ up will complete the proof.

\end{proof}\qed

The following result can be proved similarly as above.
\begin{lemma}\label{lemma:N_8_3}
Let $c\in 2R$ and $d\in\Fq.$ We have
\begin{eqnarray*}
  \mathbf{F}(c,d)
              &=& \left\{
                \begin{array}{ll}
                  2^{3m+2}(3\cdot2^{m-1}-1), &\uif d=0;\\
                  2^{3m+2}(2^{m-1}-1),      &\uif d\neq 0.\\
                \end{array}
                \right.
\end{eqnarray*}
\end{lemma}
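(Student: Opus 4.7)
Write $c=2T$ with $T\in\Te$. The plan is to follow the four-part decomposition used in the proof of Lemma~\ref{lemma:N_8_2}, splitting $\mathbf{F}(c,d)$ according to whether $a=X+2Y$ has $X\in\Tem$ or $X=0$, and whether $b\in\Fq^*$ or $b=0$. Call the four pieces $\mathbf{F}(c,d)_1,\mathbf{F}(c,d)_2,\mathbf{F}(c,d)_3,\mathbf{F}(c,d)_4$ in that same order. The decisive structural simplification over the $c=S+2T$ case is that $i^{\T((X+2Y)\cdot 2T)}=(-1)^{\tr(xt)}$ has no $Y$-dependence, so the twisting factor $(-1)^{\tr(ys)}$ that drove the earlier computation is now absent.

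First, $\mathbf{F}(c,d)_4=2\,\xi(0,0)^4=2^{4m+1}$ is immediate. For $\mathbf{F}(c,d)_1$, I would apply property 2 of $\xi$ to write $\xi(X+2Y,b)=\eta_{(X+2Y)B^{-1/3}}$, invoke $\eta_a^3\overline{\eta_a}=2^m|F_u|\eta_a^2$ from Lemma~\ref{lemma:eta} with $u=xb^{-1/3}$ (independent of $Y$), expand $\eta_a^2$ as a sum over the roots of $f_u$, and swap the $Y$- and $Z$-summations. The resulting $Y$-sum collapses to $2^m\delta_{b^{-1/3}z,0}$, which forces $z=0$; but $X\in\Tem$ gives $u\neq 0$, and $f_u(0)=u\neq 0$, so this root is excluded and the contribution vanishes. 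The conjugate term $\eta_a\overline{\eta_a^3}$ vanishes by the symmetric argument, so $\mathbf{F}(c,d)_1=0$. Similarly, $\mathbf{F}(c,d)_2$ reduces (via the same identity used in the proof of Lemma~\ref{lemma:N_8_2}) to an expression whose $Y$-sum is $\sum_{Y\in\Te}(-1)^{\tr(yx^{-1})}$: the cancelling shift by $s$ from the $c=S+2T$ case is no longer present, and this sum vanishes for every $X\in\Tem$, giving $\mathbf{F}(c,d)_2=0$.

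The only nontrivial contribution is $\mathbf{F}(c,d)_3$, where $a=2Y$ has reduction $u=0$, so $\xi(2Y,b)=\eta_{2YB^{-1/3}}$ is real and $\eta_a^3\overline{\eta_a}+\eta_a\overline{\eta_a}^3=2\eta_a^4$. Using $\eta_a^4=2^m(-1)^{\tr(u^3)}|F_u|\eta_a\overline{\eta_a}$ at $u=0$, together with $F_0=\{z\in\Fq:z^2+\sqrt{z}=0\}=\{0,1\}$ (valid since $\gcd(3,2^m-1)=1$ for odd $m$), and then $\eta_a^2=2^m(1+(-1)^{\tr(yb^{-1/3}+1)})$, a short calculation yields
\[
\mathbf{F}(c,d)_3=2^{3m+2}\sum_{b\in\Fq^*}(-1)^{\tr(bd)}=\begin{cases}2^{3m+2}(2^m-1),& d=0,\\ -2^{3m+2},& d\neq 0.\end{cases}
\]
Adding the four contributions gives $\mathbf{F}(c,d)=2^{4m+1}+\mathbf{F}(c,d)_3$, which evaluates to $3\cdot 2^{4m+1}-2^{3m+2}=2^{3m+2}(3\cdot 2^{m-1}-1)$ when $d=0$ and to $2^{4m+1}-2^{3m+2}=2^{3m+2}(2^{m-1}-1)$ when $d\neq 0$, exactly as claimed.

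The main obstacle to anticipate is justifying the (at first glance surprising) vanishing of $\mathbf{F}(c,d)_1$ and $\mathbf{F}(c,d)_2$: one must track carefully that the delta condition produced by the $Y$-sum becomes $z=0$, and then use $f_u(0)=u\neq 0$ (respectively $x^{-1}\neq 0$) to eliminate it. Once these two vanishings are established, the rest of the argument is routine character-sum bookkeeping using only the $u=0$ specialization of Lemma~\ref{lemma:eta}.
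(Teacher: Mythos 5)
Your proof is correct and follows exactly the route the paper intends: the paper omits the argument, saying only that the lemma ``can be proved similarly as above,'' and your four-part decomposition with the vanishing of the $X\in\Tem$ pieces (forced by the absence of the $(-1)^{\tr(ys)}$ twist when $c\in 2R$) and the surviving $a=2Y$, $b\neq 0$ contribution $2^{3m+2}\sum_{b\in\Fq^*}(-1)^{\tr(bd)}$ is precisely the computation of Lemma~\ref{lemma:N_8_2} specialized to $c\in 2R$. The final bookkeeping $2^{4m+1}+\mathbf{F}(c,d)_3$ matches the stated values in both cases.
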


\bibliographystyle{plain}
\bibliography{Ref_DG}
\end{document}